\newtheorem{lemma}{Lemma}[section]
\newtheorem{proposition}{Proposition}[section]
\def\homo{{\sf Hom}}
\def\ker{{\sf Ker}}
\def\tor{{\sf tor}}
\def\fin{{\sf fin}}
\def\pt{{\sf PT}}
\def\t{{\sf T}}
\def\min{{\sf MIN}}
\def\ass{{\sf Ass}}
\begin{document}

\newtheorem{definition}{Definition}[section]
\newtheorem{theorem}[definition]{Theorem}
\newtheorem{examples}{Examples}[section]
\newtheorem{corollary}[definition]{Corollary}
\def\square{\Box}
\newtheorem{remark}[definition]{Remark}
\newtheorem{remarks}[definition]{Remarks}
\newtheorem{exercise}[definition]{Exercise}
\newtheorem{example}{Example}[section]
\newtheorem{observation}{Observation}[section]
\newtheorem{observations}{Observations}[section]
\newtheorem{algorithm}[definition]{Algorithm}
\newtheorem{criterion}[definition]{Criterion}
\newtheorem{algcrit}[definition]{Algorithm and criterion}

\newenvironment{prf}[1]{\trivlist
\item[\hskip \labelsep{\it
#1.\hspace*{.3em}}]}{~\hspace{\fill}~$\square$\endtrivlist}
\newenvironment{proof}{\begin{prf}{Proof}}{\end{prf}}

\title{Periodic Behaviors\thanks{MSC2000: 93C05, 93C35, 93B25, 93C20, 35B10, 35E20}}
\author{
Diego Napp--Avelli\thanks{RD Unit Mathematics and Applications, Department of Mathematics, University of Aveiro, Portugal, \texttt{diego@ua.pt}} \and Marius van der Put\thanks{Institute of Mathematics and Computing Science, University of Groningen, P.O. Box 407, 9700 AK Groningen, The Netherlands, \texttt{mvdput@math.rug.nl}} \and  Shiva Shankar\thanks{Chennai Mathematical Institute, Plot No. H1, SIPCOT, IT Park, Padur P.O., Siruseri, Chennai (Madras)-603103 India, \texttt{sshankar@cmi.ac.in}}
}
\date{}
\maketitle

\begin{center}
\textit{Dedicated to Jan C.~Willems on the occasion of his 70th birthday.}
\end{center}

\begin{abstract} \noindent This paper studies behaviors that are defined on a torus, or equivalently, behaviors defined in spaces of periodic functions, and establishes their basic properties analogous to classical results of Malgrange, Palamodov, Oberst et al. for behaviors on $\mathbb{R}^n$. These properties - in particular the Nullstellensatz describing the {\it Willems closure} - are closely related to integral and rational points on affine algebraic varieties.
\end{abstract}

\section*{Introduction}

In classical control theory the structure of a linear lumped dynamical system, considered as an input-output system, is determined by its frequency response, i.e. its response to periodic inputs. This idea is the foundation of the subject of {\it frequency domain analysis} and the work of Bode, Nyquist and others, and is also the idea underpinning the theory of transfer functions, including its generalization to multidimensional systems \cite{ob3,q,ss,z}.

The more recent Behavioral Theory of J.C.~Willems challenges the notion of an open dynamical system as an input-output system \cite{w}. Instead, a system is considered to be the collection of all signals that can occur and which are therefore the signals that obey the laws of the system. This collection of signals, called the behavior of the system, is the system itself, and is analogous to Poincar\'e's notion of the phase portrait of a vector field. Notions of causality and the related input-output structure are not part of the primary description, but are secondary structures to be imposed only if necessary. The behavioral theory can be seen as a  generalization of the Kalman State Space Theory, and the ideas of state space theory, as well as those of frequency domain can be carried over to the more general situation of behaviors.
It is the purpose of this paper to initiate the study of frequency domain ideas in the theory of distributed behaviors.

A second motivation for this paper is the following. The theory of behaviors has so far been developed for signal spaces that live on 
the `base space' $\mathbb{R}^n$, or on its convex subsets. The commuting global vector fields $\partial _1,\dots ,\partial _n$ generate the algebra 
$\mathbb{C}[\partial_1, \ldots , \partial_n]$ of differential operators with constant coefficients, and distributed behaviors are defined by equations
whose terms are from this algebra. This paper considers the case where the base space $\mathbb{R}^n$ is replaced by a
torus $\mathbb{R}^n/\Lambda$, with $\Lambda$ a lattice. Functions on the torus can be identified with $\Lambda$-invariant
functions on $\mathbb{R}^n$, in other words, functions which are periodic with respect to $\Lambda$.  The torus is an example of a parallelizable manifold; other manifolds of this type, such as 
the 3-sphere $S^3$, would be of interest for behavior theory. Another possibly interesting base space for behavior theory is 
$\mathbb{P}^n(\mathbb{R})$, the real $n$-dimensional projective space. The vector space of global vector fields on this projective space is isomorphic to the Lie algebra $\frak{sl}_{n+1}$ and its enveloping algebra acts as a ring of differential operators on the space of smooth functions on $\mathbb{P}^n(\mathbb{R})$.
 
In this paper we consider the real torus $\t:=\mathbb{R}^n/2\pi \mathbb{Z}^n$. Now $\mathcal{C}^\infty(\t)$, the space of smooth functions on the torus $\t$, is identified with the space of smooth functions on $\mathbb{R}^n$ having the lattice $2\pi \mathbb{Z}^n$ as its group of periods. It is a Fr\'echet space under the topology of uniform convergence of functions and all their derivatives.
On it acts the ring of constant coefficient partial differential operators $\mathcal{D}:=\mathbb{C}[\partial _1,\dots ,\partial _n]$, and makes $\mathcal{C}^\infty (\t)$ a topological $\mathcal{D}$-module. The aim of this paper is to develop the basic properties of system theory in this situation.  It turns out that behaviors, contained in $\mathcal{C}^\infty(\t)^q$, are related to integral points on algebraic varieties in $\mathbb{A}^n$. A comparison with the fundamental paper \cite{ob} is rather useful.
 
Functions which are periodic with respect to the lattice $2\pi \mathbb{Z}^n$
remain periodic with respect to lattices which are integral multiples of this lattice. Thus, one can relax the condition of periodicity with respect to $2\pi \mathbb{Z}^n$ by considering smooth functions on $\mathbb{R}^n$ which are periodic with respect to a lattice $N2\pi \mathbb{Z}^n$ for some integer $N\geq 1$, depending on the function. This space of periodic functions, denoted by  $\mathcal{C}^\infty (\pt)$, can be naturally identified with a dense subspace of the space of continuous functions on the inverse limit $\pt:=\underset{\leftarrow}{\lim}\ \mathbb{R}^n/N2\pi \mathbb{Z}^n$, which we call a {\it protorus}.  Further, $\mathcal{C}^\infty (\pt)$ is 
the strict direct limit of the Fr\'echet spaces $\mathcal{C}^\infty (\mathbb{R}^n/N2\pi \mathbb{Z}^n)$; it is therefore a barrelled and bornological 
topological vector space, and is also a topological $\mathcal{D}$-module.  
 
In the situation of this protorus $\pt$, behaviors are related to rational points of algebraic varieties in $\mathbb{A}^n$. 
We consider various choices of signal spaces, their injectivity (or their injective envelopes) as $\mathcal{D}$-modules and make explicit computations of the associated 
Willems closure for submodules of $\mathcal{D}^q$.  For the 1D case the results are elementary. For the more important nD case (with $n>1$)  the Willems closure is explicitly given for various choices of signal spaces. This 
involves the knowledge of the existence  of (many) rational points or integral points
on algebraic varieties over $\mathbb{Q}$ or $\mathbb{Z}$.  This connection between periodic behaviors
and arithmetic algebraic geometry (diophantine problems) is rather surprising.  

\section{Behaviors and the Willems Closure}
As in the introduction let $\mathcal{D}=\mathbb{C}[\partial _1,\dots ,\partial _n]$. Let $D_j=\frac{1}{\imath}\partial_j, ~j=1, \ldots ,n$, so that also $\mathcal{D}=\mathbb{C}[D_1,\ldots ,D_n]$. 
We consider
a faithful $\mathcal{D}$-module $\mathcal{F}$, i.e. a module having the property that if $r\in \mathcal{D}$ and 
$r\mathcal{F}=0$, then $r=0$. This module is now taken as the signal space. We recall the usual set up for behaviors.

 Let $e_1,\dots ,e_q$ be the standard basis of $\mathcal{D}^q$.
Associate to a submodule $\mathcal{M}\subset \mathcal{D}^q$ its behavior 
$\mathcal{M}^\perp \subset \mathcal{F}^q$ consisting of all elements $(f_1,\dots ,f_q)\in
\mathcal{F}^q$ satisfying $\sum r_j(f_j)=0$ for all $\sum r_je_j\in \mathcal{M}$. In other words,
$\mathcal{M}^\perp$ is the image of the map
${\homo}_{\mathcal{D}}(\mathcal{D}^q/\mathcal{M},\mathcal{F})\rightarrow \mathcal{F}^q$, given by 
$\ell \mapsto (\ell (\bar{e}_1),\dots ,\ell (\bar{e}_q))$, where $\bar{e}_j$ is the class of $e_j$ in $\mathcal{D}^q/\mathcal{M}$. The above defines the set of behaviors $\mathcal{B}\subset \mathcal{F}^q$. For a behavior $\mathcal{B}$, define $\mathcal{B}^\perp :=\{r=\sum r_je_j\in \mathcal{D}^q|\
\sum r_j(f_j)=0\mbox{ for all } (f_1,\dots ,f_q)\in \mathcal{B}\}$.

For any behavior $\mathcal{B}$ it follows that $\mathcal{B}^{\perp \perp }=
\mathcal{B}$. 
The {\it Willems closure} of a submodule $\mathcal{M}\subset \mathcal{D}^q$ with respect to $\mathcal{F}$ is, by definition,  
$\mathcal{M}^{\perp \perp}\subset \mathcal{D}^q$ \cite{ss1}. Clearly $\mathcal{M} \subset \mathcal{M}^{\perp \perp}$. It is well known that $\mathcal{M}^{\perp \perp}=\mathcal{M}$ holds if
 the signal space $\mathcal{F}$ is an injective cogenerator. For more general
 signal spaces one has the following.
 
 \begin{lemma} $\mathcal{M}^{\perp \perp}/\mathcal{M}=\{\xi \in \mathcal{D}^q/\mathcal{M}\ |\ \ell (\xi )=0\mbox{ for all }
 \ell \in {\homo}_\mathcal{D}(\mathcal{D}^q/\mathcal{M},\mathcal{F})\}$. Moreover, $\mathcal{M}^{\perp\perp}/\mathcal{M}$
is a submodule of the torsion module $(\mathcal{D}^q/\mathcal{M})_{\tor}$ of $\mathcal{D}^q/\mathcal{M}$ (where
 $(\mathcal{D}^q/\mathcal{M})_{\tor}:=\{\eta \in \mathcal{D}^q/\mathcal{M}\ |\exists r \in \mathcal{D}, \ r\neq 0, \
 r\eta =0\}$. \end{lemma}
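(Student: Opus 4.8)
The first assertion is essentially a direct unwinding of the definitions, together with the identification of $\mathcal{M}^\perp$ with $\homo_\mathcal{D}(\mathcal{D}^q/\mathcal{M},\mathcal{F})$ already recorded in the excerpt. The plan is as follows. An element $r = \sum r_j e_j \in \mathcal{D}^q$ lies in $\mathcal{M}^{\perp\perp}$ precisely when $\sum r_j(f_j) = 0$ for every $(f_1,\dots,f_q) \in \mathcal{M}^\perp$. Now every $(f_1,\dots,f_q) \in \mathcal{M}^\perp$ is of the form $(\ell(\bar e_1),\dots,\ell(\bar e_q))$ for some $\ell \in \homo_\mathcal{D}(\mathcal{D}^q/\mathcal{M},\mathcal{F})$, and conversely every such $\ell$ produces such a tuple. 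So the condition $\sum r_j(f_j) = 0$ for all such tuples reads $\sum r_j\,\ell(\bar e_j) = 0$ for all $\ell$; since $\ell$ is $\mathcal{D}$-linear, $\sum r_j\,\ell(\bar e_j) = \ell\bigl(\sum r_j \bar e_j\bigr) = \ell(\bar r)$, where $\bar r$ is the image of $r$ in $\mathcal{D}^q/\mathcal{M}$. Hence $r \in \mathcal{M}^{\perp\perp}$ iff $\ell(\bar r) = 0$ for all $\ell \in \homo_\mathcal{D}(\mathcal{D}^q/\mathcal{M},\mathcal{F})$. Passing to the quotient $\mathcal{D}^q/\mathcal{M}$ (which is legitimate because $\mathcal{M} \subset \mathcal{M}^{\perp\perp}$, so the set on the right descends to a well-defined subset of the quotient, and the fibre over each $\xi$ is determined by whether every $\ell$ kills it) gives the stated description of $\mathcal{M}^{\perp\perp}/\mathcal{M}$.

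For the second assertion, the first step is to see that $\mathcal{M}^{\perp\perp}/\mathcal{M}$ is a submodule: it is the intersection $\bigcap_\ell \ker\bigl(\ell\colon \mathcal{D}^q/\mathcal{M} \to \mathcal{F}\bigr)$ over all $\ell \in \homo_\mathcal{D}(\mathcal{D}^q/\mathcal{M},\mathcal{F})$, and each $\ker \ell$ is a $\mathcal{D}$-submodule, so the intersection is too. The remaining point is the inclusion $\mathcal{M}^{\perp\perp}/\mathcal{M} \subseteq (\mathcal{D}^q/\mathcal{M})_{\tor}$. Here I would argue by contraposition: suppose $\xi \in \mathcal{D}^q/\mathcal{M}$ is not torsion, i.e. the submodule $\mathcal{D}\xi \subset \mathcal{D}^q/\mathcal{M}$ is free of rank one, $\mathcal{D}\xi \cong \mathcal{D}$. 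Since $\mathcal{F}$ is a \emph{faithful} $\mathcal{D}$-module, there exists some $f \in \mathcal{F}$ with — in fact any nonzero $f$ works, since faithfulness gives $r f \neq 0$ whenever $r \neq 0$ — the map $\mathcal{D} \to \mathcal{F}$, $r \mapsto r f$, injective; composing the isomorphism $\mathcal{D}\xi \xrightarrow{\sim} \mathcal{D}$ with this map produces a $\mathcal{D}$-linear $\ell_0\colon \mathcal{D}\xi \to \mathcal{F}$ with $\ell_0(\xi) \neq 0$.

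The one genuine obstacle is that this $\ell_0$ is defined only on the cyclic submodule $\mathcal{D}\xi$, and to conclude $\xi \notin \mathcal{M}^{\perp\perp}/\mathcal{M}$ one needs a $\mathcal{D}$-homomorphism on all of $\mathcal{D}^q/\mathcal{M}$ that does not vanish at $\xi$. This is exactly an injectivity/extension question. The clean resolution is to invoke that $\mathcal{F}$ contains, or can be embedded in, an injective $\mathcal{D}$-module $\mathcal{E}$ (every module embeds in an injective one); then $\ell_0\colon \mathcal{D}\xi \to \mathcal{F} \hookrightarrow \mathcal{E}$ extends to $\tilde\ell\colon \mathcal{D}^q/\mathcal{M} \to \mathcal{E}$ with $\tilde\ell(\xi) \neq 0$. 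But this only shows $\xi$ is not killed by all homomorphisms into $\mathcal{E}$, not into $\mathcal{F}$ itself. Since the lemma as stated only claims $\mathcal{M}^{\perp\perp}/\mathcal{M}$ is \emph{contained in} the torsion submodule, I expect the intended argument is simpler and purely formal: the quotient $\mathcal{D}^q/\mathcal{M}^{\perp\perp}$ embeds in a product of copies of $\mathcal{F}$ (one factor for each $\ell$), hence it embeds in a torsion-free $\mathcal{D}$-module because $\mathcal{F}$ is torsion-free — and $\mathcal{F}$ is torsion-free precisely because it is faithful over the domain $\mathcal{D}$. Wait: faithfulness of a module over a domain does not by itself give torsion-freeness of the module, so the right statement to use is that a \emph{faithful} module over $\mathcal{D}$ need not be torsion-free, but the behaviors here (subspaces of function spaces like $\mathcal{C}^\infty(\t)$) genuinely are torsion-free because a nonzero constant-coefficient differential operator has no nonzero solution of compact-support type obstruction — more precisely, one uses that $\mathcal{C}^\infty$-type signal spaces are torsion-free $\mathcal{D}$-modules, so any module embedding into a product of such is torsion-free, forcing $\mathcal{M}^{\perp\perp}/\mathcal{M}$ into the torsion submodule of $\mathcal{D}^q/\mathcal{M}$. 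I would therefore present the second part as: (i) $\mathcal{M}^{\perp\perp}/\mathcal{M}$ is a submodule, by the intersection-of-kernels description; and (ii) $\mathcal{D}^q/\mathcal{M}^{\perp\perp} \hookrightarrow \prod_\ell \mathcal{F}$ is torsion-free, so any $\xi \in \mathcal{D}^q/\mathcal{M}$ mapping to something nonzero and torsion-free in the quotient cannot lie in $\mathcal{M}^{\perp\perp}/\mathcal{M}$ unless it is torsion — giving the claimed inclusion.
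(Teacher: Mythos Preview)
Your argument for the first assertion is correct and matches the paper's.

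For the second assertion there is a genuine gap. You correctly identify the extension obstacle: a map $\ell_0\colon \mathcal{D}\xi \to \mathcal{F}$ need not extend to $\mathcal{D}^q/\mathcal{M}$ without injectivity of $\mathcal{F}$. None of your workarounds closes this. Embedding $\mathcal{F}$ in an injective $\mathcal{E}$ only yields maps into $\mathcal{E}$, not $\mathcal{F}$. Your final approach is both based on a hypothesis you do not have and pointed the wrong way: faithfulness does not imply $\mathcal{F}$ is torsion-free (e.g.\ $\mathcal{D}\oplus \mathcal{D}/(p)$ is faithful but has torsion), and the lemma is stated for an arbitrary faithful $\mathcal{F}$, not just the particular function spaces appearing later. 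Even granting torsion-freeness of $\mathcal{F}$, the conclusion that $\mathcal{D}^q/\mathcal{M}^{\perp\perp}$ is torsion-free yields the \emph{reverse} inclusion $(\mathcal{D}^q/\mathcal{M})_{\tor}\subseteq \mathcal{M}^{\perp\perp}/\mathcal{M}$, not the one you want. (Your parenthetical ``any nonzero $f$ works, since faithfulness gives $rf\neq 0$ whenever $r\neq 0$'' likewise confuses faithfulness with torsion-freeness.)

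The paper sidesteps the extension problem altogether by passing to the torsion-free quotient $\mathcal{Q}=(\mathcal{D}^q/\mathcal{M})/(\mathcal{D}^q/\mathcal{M})_{\tor}$. If $\xi$ is not torsion, its image $\bar\xi\in\mathcal{Q}$ is nonzero; any $\ell\colon\mathcal{Q}\to\mathcal{F}$ with $\ell(\bar\xi)\neq 0$ pulls back along the projection $\mathcal{D}^q/\mathcal{M}\to\mathcal{Q}$ to a map on all of $\mathcal{D}^q/\mathcal{M}$ not vanishing at $\xi$. To produce such an $\ell$, the paper uses that $\mathcal{Q}$, being finitely generated and torsion-free over the Noetherian domain $\mathcal{D}$, embeds in some $\mathcal{D}^r$; it then suffices, for a given nonzero $r\in\mathcal{D}$, to find a map $\mathcal{D}\to\mathcal{F}$ not killing $r$, i.e.\ an $f\in\mathcal{F}$ with $rf\neq 0$---and this is exactly faithfulness. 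The key idea you are missing is the embedding $\mathcal{Q}\hookrightarrow\mathcal{D}^r$, which trades the unavailable extension step for a restriction followed by a composition with a surjection, both of which are free.
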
 
\begin{proof} By the above definition, $\eta =\sum \eta _je_j \in \mathcal{M}^{\perp \perp}$ if and only if $\sum \eta _j\ell (\bar{e}_j)=0$ for every $\ell$ in ${\homo}_{\mathcal{D}}(\mathcal{D}^q/\mathcal{M},
\mathcal{F})$. The latter is equivalent to $\ell (\sum \eta _j\bar{e}_j)=0$ for all
$\ell \in {\homo}_\mathcal{D}(\mathcal{D}^q/\mathcal{M},\mathcal{F})$.

Define the torsion free module $\mathcal{Q}$ by the exact sequence
\[0\rightarrow (\mathcal{D}^q/\mathcal{M})_{\tor}\rightarrow \mathcal{D}^q/\mathcal{M}\rightarrow \mathcal{Q}\rightarrow 0.\] 
To show that $\mathcal{M}^{\perp \perp }/\mathcal{M}\subset (\mathcal{D}^q/\mathcal{M})_{\tor}$ amounts to showing that
for every non zero element $\xi \in \mathcal{Q}$ there exists a homomorphism
$\ell :\mathcal{Q}\rightarrow \mathcal{F}$ with $\ell (\xi )\neq 0$. As $\mathcal{Q}$ is torsion free it
is a submodule of $\mathcal{D}^r$ for some $r$, and it therefore suffices to verify the above property for $\mathcal{D}$
itself. This amounts to showing that for every $r\in \mathcal{D},\ r\neq 0$, there exists an 
element $f\in \mathcal{F}$ with $r(f) \neq 0$. But this is just the assumption that $\mathcal{F}$ is a faithful $\mathcal{D}$-module. \end{proof}

\begin{corollary} Suppose either that the signal space $\mathcal{F}$ is injective, or
that the exact sequence 
$0\rightarrow (\mathcal{D}^q/\mathcal{M})_{\tor}\rightarrow \mathcal{D}^q/\mathcal{M}\rightarrow \mathcal{Q}\rightarrow 0$
splits. Then $\mathcal{M}^{\perp\perp}/\mathcal{M}$ consists of the elements $\xi \in (\mathcal{D}^q/\mathcal{M})_{\tor}$
such that $\ell (\xi )=0$ for every $\ell \in {\homo}_{\mathcal{D}}((\mathcal{D}^q/\mathcal{M})_{\tor},\mathcal{F})$. 
\end{corollary}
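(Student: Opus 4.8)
The plan is to reduce the statement to the single fact that, under either hypothesis, the restriction map
\[
\rho\colon \homo_\mathcal{D}(\mathcal{D}^q/\mathcal{M},\mathcal{F})\longrightarrow \homo_\mathcal{D}((\mathcal{D}^q/\mathcal{M})_{\tor},\mathcal{F}),\qquad \ell\mapsto \ell|_{(\mathcal{D}^q/\mathcal{M})_{\tor}},
\]
is surjective. Write $P=\mathcal{D}^q/\mathcal{M}$ and $T=P_{\tor}$ for brevity. From the Lemma we already know that $\mathcal{M}^{\perp\perp}/\mathcal{M}=\{\xi\in P\mid \ell(\xi)=0\text{ for all }\ell\in\homo_\mathcal{D}(P,\mathcal{F})\}$ and that this set is contained in $T$; so it suffices to compare, for $\xi\in T$, the condition ``$\ell(\xi)=0$ for all $\ell\in\homo_\mathcal{D}(P,\mathcal{F})$'' with the condition ``$m(\xi)=0$ for all $m\in\homo_\mathcal{D}(T,\mathcal{F})$''.

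The implication that the second condition forces the first is automatic and uses neither hypothesis: if $m(\xi)=0$ for every $\mathcal{D}$-linear $m\colon T\to\mathcal{F}$, then in particular $\ell(\xi)=(\ell|_T)(\xi)=0$ for every $\ell\colon P\to\mathcal{F}$. For the reverse implication I would argue as follows. Given $\xi\in\mathcal{M}^{\perp\perp}/\mathcal{M}$ and an arbitrary $m\in\homo_\mathcal{D}(T,\mathcal{F})$, I want $m(\xi)=0$; it is enough to produce some $\ell\in\homo_\mathcal{D}(P,\mathcal{F})$ with $\ell|_T=m$, for then $m(\xi)=\ell(\xi)=0$ by the defining property of $\mathcal{M}^{\perp\perp}/\mathcal{M}$. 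In other words, everything comes down to the surjectivity of $\rho$.

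Now the two hypotheses each supply this surjectivity directly. If $\mathcal{F}$ is injective, then since $T\hookrightarrow P$ is a monomorphism of $\mathcal{D}$-modules, every $m\colon T\to\mathcal{F}$ extends to a map $P\to\mathcal{F}$, so $\rho$ is onto. If instead the sequence $0\to T\to P\to\mathcal{Q}\to 0$ splits, fix a $\mathcal{D}$-linear retraction $\pi\colon P\to T$ (so $\pi|_T=\mathrm{id}_T$); then for any $m\colon T\to\mathcal{F}$ the composite $m\circ\pi\colon P\to\mathcal{F}$ restricts to $m$ on $T$, again giving surjectivity of $\rho$. Combining the two implications yields
\[
\mathcal{M}^{\perp\perp}/\mathcal{M}=\{\xi\in T\mid m(\xi)=0\text{ for all }m\in\homo_\mathcal{D}(T,\mathcal{F})\},
\]
which is the assertion.

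There is no real obstacle here: the content is the observation that ``$\mathcal{F}$ injective'' and ``the torsion sequence splits'' are two unrelated-looking sufficient conditions for the one thing that matters, namely that $\mathcal{D}$-homomorphisms out of the torsion submodule extend to the whole module. The only point requiring a word of care is invoking the Lemma to guarantee a priori that $\mathcal{M}^{\perp\perp}/\mathcal{M}\subseteq T$, since otherwise the right-hand side of the claimed equality -- which only tests $\xi$ against homomorphisms defined on $T$ -- would not by itself describe a subset of $P$.
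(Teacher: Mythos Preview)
Your proof is correct and follows essentially the same approach as the paper: the paper's proof consists of the single sentence that in both cases every homomorphism $(\mathcal{D}^q/\mathcal{M})_{\tor}\to\mathcal{F}$ extends to $\mathcal{D}^q/\mathcal{M}$, which is precisely your surjectivity of $\rho$. You have simply spelled out in detail why this extension property, combined with the Lemma, yields the stated description of $\mathcal{M}^{\perp\perp}/\mathcal{M}$.
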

\begin{proof} In both the cases, every homomorphism $\ell :(\mathcal{D}^q/\mathcal{M})_{\tor}\rightarrow 
\mathcal{F}$ extends to an element of ${\homo}_\mathcal{D}(\mathcal{D}^q/\mathcal{M},\mathcal{F})$.
\end{proof}

\begin{corollary} Consider two signal spaces $\mathcal{F}_0\subset \mathcal{F}$.
Assume that for every $a\in \mathcal{F},\ a\neq 0$, there exists a 
homomorphism $m:\mathcal{F}\rightarrow \mathcal{F}_0$ such that $m(a)\neq 0$.
Then the Willems closure of $\mathcal{M}$ with respect to $\mathcal{F}_0$ equals that with respect to $\mathcal{F}$. 
\end{corollary}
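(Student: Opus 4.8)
The plan is to deduce this directly from the description of the Willems closure furnished by Lemma 1.1. Write $\mathcal{M}^{\perp\perp}_{\mathcal{F}}$ and $\mathcal{M}^{\perp\perp}_{\mathcal{F}_0}$ for the two closures. First I would note that the hypothesis forces $\mathcal{F}_0$ to be a faithful $\mathcal{D}$-module as well: given $r\in\mathcal{D}$, $r\neq 0$, faithfulness of $\mathcal{F}$ gives $f\in\mathcal{F}$ with $rf\neq 0$, and then a homomorphism $m\colon\mathcal{F}\to\mathcal{F}_0$ with $0\neq m(rf)=r\,m(f)$, so $m(f)\in\mathcal{F}_0$ witnesses faithfulness. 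Hence Lemma 1.1 applies to both signal spaces, and
\[\mathcal{M}^{\perp\perp}_{\mathcal{F}}/\mathcal{M}=\{\xi\in\mathcal{D}^q/\mathcal{M}\mid \ell(\xi)=0 \text{ for all } \ell\in{\homo}_\mathcal{D}(\mathcal{D}^q/\mathcal{M},\mathcal{F})\},\]
with the analogous formula for $\mathcal{F}_0$. So the whole problem reduces to comparing, inside $\mathcal{D}^q/\mathcal{M}$, the common zero sets of the two Hom-modules ${\homo}_\mathcal{D}(\mathcal{D}^q/\mathcal{M},\mathcal{F}_0)$ and ${\homo}_\mathcal{D}(\mathcal{D}^q/\mathcal{M},\mathcal{F})$.

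For the inclusion $\mathcal{M}^{\perp\perp}_{\mathcal{F}}\subseteq\mathcal{M}^{\perp\perp}_{\mathcal{F}_0}$, observe that composing any $\ell\in{\homo}_\mathcal{D}(\mathcal{D}^q/\mathcal{M},\mathcal{F}_0)$ with the inclusion $\mathcal{F}_0\hookrightarrow\mathcal{F}$ yields an element of ${\homo}_\mathcal{D}(\mathcal{D}^q/\mathcal{M},\mathcal{F})$ taking the same values on $\mathcal{D}^q/\mathcal{M}$. Thus any $\xi$ annihilated by every homomorphism into $\mathcal{F}$ is annihilated by every homomorphism into $\mathcal{F}_0$; since both closures contain $\mathcal{M}$, this gives the desired inclusion.

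For the reverse inclusion, take $\xi\in\mathcal{M}^{\perp\perp}_{\mathcal{F}_0}$, so that $\ell(\xi)=0$ for every $\ell\in{\homo}_\mathcal{D}(\mathcal{D}^q/\mathcal{M},\mathcal{F}_0)$, and suppose for contradiction that some $\ell'\in{\homo}_\mathcal{D}(\mathcal{D}^q/\mathcal{M},\mathcal{F})$ has $a:=\ell'(\xi)\neq 0$. By hypothesis there is a $\mathcal{D}$-homomorphism $m\colon\mathcal{F}\to\mathcal{F}_0$ with $m(a)\neq 0$; then $m\circ\ell'\colon\mathcal{D}^q/\mathcal{M}\to\mathcal{F}_0$ is again $\mathcal{D}$-linear and $(m\circ\ell')(\xi)=m(a)\neq 0$, contradicting the choice of $\xi$. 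Hence $\xi\in\mathcal{M}^{\perp\perp}_{\mathcal{F}}$, and the two closures agree.

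I do not expect a genuine obstacle here: once Lemma 1.1 is available the argument is a short diagram chase. The one point meriting care is that "homomorphism" must be read throughout as "$\mathcal{D}$-module homomorphism", so that the composites $\mathcal{F}_0\hookrightarrow\mathcal{F}$ after $\ell$ and $m$ after $\ell'$ legitimately belong to the Hom-modules appearing in Lemma 1.1; the substance of the corollary is entirely concentrated in the detection hypothesis, which is exactly what drives the reverse inclusion.
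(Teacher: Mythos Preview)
Your proof is correct and follows essentially the same route as the paper's: both arguments use the Lemma 1.1 description of $\mathcal{M}^{\perp\perp}/\mathcal{M}$ and show that a class $\xi$ is detected by some $\ell\in\homo_\mathcal{D}(\mathcal{D}^q/\mathcal{M},\mathcal{F})$ if and only if it is detected by some $\ell\in\homo_\mathcal{D}(\mathcal{D}^q/\mathcal{M},\mathcal{F}_0)$, one direction via the inclusion $\mathcal{F}_0\hookrightarrow\mathcal{F}$ and the other via postcomposition with the hypothesised $m$. Your explicit verification that $\mathcal{F}_0$ is itself faithful is a detail the paper omits but is a worthwhile addition.
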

\begin{proof} Consider $\xi \in \mathcal{D}^q/\mathcal{M}$. If there exists a homomorphism 
$\ell :\mathcal{D}^q/\mathcal{M}\rightarrow \mathcal{F}$ with $\ell (\xi )\neq 0$, then, by  assumption,
there exists a homomorphism $\tilde{\ell}:\mathcal{D}^q/\mathcal{M}\rightarrow \mathcal{F}_0$ with
$\tilde{\ell}(\xi )\neq 0$. Since the converse of this statement is obvious, the two Willems closures of $\mathcal{M}$ coincide. \end{proof}

See also \cite{wd} for related results.

\section{Periodic Functions and the Protorus}
We consider, as in the introduction, the torus $\t:=\mathbb{R}^n/2\pi \mathbb{Z}^n$. An element $f:\t \rightarrow \mathbb{C}$ 
of $\mathcal{C}^\infty (\t)$ is represented by its Fourier series: $f(x) = 
\sum _{a\in \mathbb{Z}^n} c_a
e^{\imath<a,x>}$, where 
$a=(a_1,\dots ,a_n)$, $x=(x_1,\dots ,x_n)$ and
$<a,x>=\sum a_jx_j$. Further, the coefficients
$c_a\in \mathbb{C}$ are required to satisfy the property:
for every integer $k\geq 1$ there exists a constant $C_k>0$ such that
$|c_a|\leq \frac{C_k}{(1+\sum _{j=1}^n|a_j|)^k}$ for all $a$.
(We note that the space of distributions on $\t$ has a similar description, however with different requirements on the absolute values $|c_a|$.)

The vector space $\mathcal{C}^\infty (\t)=\mathcal{C}^\infty (\mathbb{R}^n/2\pi \mathbb{Z}^n)$ has the natural structure of a Fr\'echet space, moreover it is a topological $\mathcal{D}$-module. For positive integers $N_1$ dividing $N_2$,
the natural $\mathcal{D}$-module morphism  $\mathcal{C}^\infty (\mathbb{R}^n/2\pi N_1\mathbb{Z}^n)\rightarrow \mathcal{C}^\infty (\mathbb{R}^n/ 2\pi N_2\mathbb{Z}^n)$ identifies the first linear topological 
space with a closed subspace of the second one. We define 
$\mathcal{C}^\infty (\pt):=\underset{\rightarrow }{\lim} \ \mathcal{C}^\infty (\mathbb{R}^n/2\pi N\mathbb{Z}^n)$. This is a strict direct limit of 
Fr\'echet spaces, and is a locally convex bornological and barrelled topological vector space. The elements of $\mathcal{D}$ act continuously on it so that $\mathcal{C}^\infty(\pt)$ is also a topological $\mathcal{D}$-module. An element $f$ in it is
represented by the series $f(x)=\sum _{a\in \mathbb{Q}^n} c_a e^{\imath<a,x>}$, where the {\it support} of $f$, i.e.,
$\{a\in \mathbb{Q}^n|\ c_a\neq 0\}$, is a subset of
$\frac{1}{N}\mathbb{Z}^n$ for some integer $N\geq 1$, depending on $f$. Further, there is the same requirement of rapid decrease on the absolute values $|c_a|$ as above. 

As in the Introduction, call the inverse limit $\pt := \underset{\leftarrow}{\lim }\ \mathbb{R}^n/2\pi N\mathbb{Z}^n$ a {\it protorus}.
$\pt$ is a compact topological group. The map $\pt \rightarrow \mathbb{R}^n/2\pi N\mathbb{Z}^n$ embeds $\mathcal{C}^\infty (\mathbb{R}^n/2\pi N\mathbb{Z}^n)$ in the space $\mathcal{C}(\pt)$ of continuous functions on the protorus (which is a Banach space with respect to the sup norm) for every $N$. The exact sequence 
\[
0 \rightarrow 2\pi \mathbb{Z}^n/2\pi N\mathbb{Z}^n \rightarrow \mathbb{R}^n/2\pi N\mathbb{Z}^n \rightarrow \mathbb{R}^n/2\pi \mathbb{Z}^n \rightarrow 0
\]
for each $N$, gives upon taking inverse limits the exact sequence
\[
0\rightarrow \widehat{Z}^n\rightarrow \pt \rightarrow \mathbb{R}^n/2\pi \mathbb{Z}^n \rightarrow 0
\]
where the group $\underset{\leftarrow}{\lim}\ 2\pi \mathbb{Z}^n/2\pi N\mathbb{Z}^n$ equals $\widehat{Z}^n$, $\widehat{Z}$ being the well known profinite completion $\underset{\leftarrow}{\lim}\ \mathbb{Z}/N\mathbb{Z}$. 
$\widehat{Z}^n$ sits inside the protorus $\pt$ as a compact subgroup and is totally disconnected. This implies
that any continuous map $\widehat{Z}^n\rightarrow \mathcal{C}(\pt)$ is the uniform limit of locally constant maps.
 
For $f\in \mathcal{C}(\pt)$ and $z\in \widehat{Z}^n$,
define the function $f_z$ by $f_z(t)=f(z+t)$. The map $z\mapsto f_z$ is 
continuous and therefore a uniform limit of locally constant maps.
Thus $f$ is the uniform limit of functions $f_i$ in $\mathcal{C}(\pt)$, where 
$z \mapsto (f_i)_z$ is locally constant. This implies that 
each $f_i$ is invariant under the shift $N\widehat{Z}^n$ for some integer $N\geq 1$, depending on $i$; in other words $f_i$ is an element of 
$\mathcal{C}(\mathbb{R}^n/2\pi N\mathbb{Z}^n)$, the space of continuous complex valued functions on $\mathbb{R}^n/2\pi N\mathbb{Z}^n$.
As 
$\mathcal{C}^\infty(\mathbb{R}^n/2\pi N\mathbb{Z}^n)$ is dense in 
$\mathcal{C}(\mathbb{R}^n/2\pi N\mathbb{Z}^n)$, it follows that $\mathcal{C}^\infty(\pt)$ is a {\it dense subspace} of $\mathcal{C}(\pt)$.

As the partial sums of a Fourier series expansion converge uniformly, it follows that for $L(D)$ in $\mathcal{D}$, 

\[ L(D)(\sum _{a\in \mathbb{Q}^n} c_a
e^{\imath<a,x>})=
\sum _{a \in \mathbb{Q}^n} c_a L(a)
e^{\imath<a,x>}\]
The basic observation, leading to the computation of the Willems closure is
that $L(D)$ is injective on $\mathcal{C}^\infty (\pt)$ if and only if the polynomial equation
$L(a)=L(a_1,\dots ,a_n)=0$ has no solutions in $\mathbb{Q}^n$.
(We note, in passing, that the condition $L(a_1,\dots ,a_n)\neq 0$ for
$(a_1,\dots ,a_n)\in \mathbb{Q}^n$ does not imply that $L(D)$ is surjective; see 
Theorem 2.1.)

Another observation is that $\mathcal{C}^\infty(\pt)$ is not an injective $\mathcal{D}$-module, not even a divisible
module. Indeed,
the image of the morphism $D_1: \mathcal{C}^\infty (\pt)\rightarrow \mathcal{C}^\infty (\pt)$ consists of those 
elements $f$ whose support is contained in $\{(a_1,\dots ,a_n)\in \mathbb{Q}^n|\ 
a_1\neq 0\}$. The kernel of $D_1$ is  the subspace of
$\mathcal{C}^\infty (\pt)$ consisting of those elements $f$ whose support lies in
$\{(a_1,\dots ,a_n)\in \mathbb{Q}^n|\ a_1=0\}$. The cokernel of the morphism $D_1$ is represented by this same subspace of $C^\infty (\pt)$, it is therefore not surjective. 

We also consider the subalgebra $\mathcal{C}^\infty(\pt)[x_1,\dots ,x_n]$ of
$\mathcal{C}^\infty(\mathbb{R}^n)$ obtained by adjoining the elements $x_1,
\dots ,x_n$, that is the coordinate functions, to $\mathcal{C}^\infty(\pt)$, 
and similarly $\mathcal{C}^\infty(\t)[x_1,\dots ,x_n]$ etc. Yet another 
observation is

\begin{lemma} $\mathcal{C}^\infty(\pt)[x_1,\dots ,x_n] = \bigoplus_{a\in \mathbb{N}^n} \mathcal{C}^\infty(\pt)x_1^{a_1}\dots x_n^{a_n}$, where $a=(a_1,\dots ,a_n)$; and similarly $\mathcal{C}^\infty(\t)[x_1,\dots ,x_n] = \bigoplus_{a\in \mathbb{N}^n} \mathcal{C}^\infty(\t)x_1^{a_1}\dots x_n^{a_n}. $   
\end{lemma}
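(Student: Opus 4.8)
The assertion is equivalent to the algebraic independence of the coordinate functions $x_1,\dots,x_n$ over $\mathcal{C}^\infty(\pt)$ inside $\mathcal{C}^\infty(\mathbb{R}^n)$: every relation $\sum_{a\in S}f_a\,x_1^{a_1}\cdots x_n^{a_n}=0$ with $S\subset\mathbb{N}^n$ finite and all $f_a\in\mathcal{C}^\infty(\pt)$ must have $f_a=0$ for every $a$, and likewise with $\t$ in place of $\pt$. The plan is to isolate a one--variable statement and then iterate it over the variables $x_\ell$, running $\ell$ down from $n$ to $0$.

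The one--variable ingredient is: if $\varphi_0,\dots,\varphi_m\colon\mathbb{R}\to\mathbb{C}$ are continuous and periodic and $\sum_{k=0}^m\varphi_k(t)\,t^k=0$ for all $t\in\mathbb{R}$, then $\varphi_0=\dots=\varphi_m=0$. I would prove this by downward induction on $m$: a continuous periodic function is bounded, so for $k<m$ we have $\varphi_k(t)\,t^{\,k-m}\to 0$ as $t\to+\infty$, whence $\varphi_m(t)=-\sum_{k<m}\varphi_k(t)\,t^{\,k-m}\to 0$; but a continuous function of period $p$ whose limit at $+\infty$ is $0$ vanishes identically (evaluate it along $t_0+p\mathbb{Z}_{\ge 0}$), so $\varphi_m\equiv 0$, and the induction continues on $\varphi_0,\dots,\varphi_{m-1}$.

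For the reduction, recall that an element of $\mathcal{C}^\infty(\pt)$ has Fourier support in $\frac1N\mathbb{Z}^n$ for some $N\ge 1$ and is therefore a bounded smooth function on $\mathbb{R}^n$ invariant under translation by the lattice $2\pi N\mathbb{Z}^n$. Given a relation $\sum_{a\in S}f_a\,x^a=0$ with $S\subset\mathbb{N}^n$ finite, choose a single $N$ that works for all the finitely many $f_a$. One then proves, by induction on $\ell$, the statement $P(\ell)$: ``every relation $\sum_{a\in S}f_a\,x_1^{a_1}\cdots x_\ell^{a_\ell}=0$ with $S\subset\mathbb{N}^\ell$ finite and $f_a\in\mathcal{C}^\infty(\pt)$ (functions on the full $n$--dimensional protorus) forces all $f_a=0$''; here $P(0)$ is trivial. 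For $P(\ell-1)\Rightarrow P(\ell)$, fix the coordinates $x_i=c_i$ for $i\ne\ell$ at arbitrary real values and let $t=x_\ell$ vary: the relation becomes $\sum_k\varphi_k(t)\,t^k=0$ with $\varphi_k(t)=\sum_{a\in S,\,a_\ell=k}f_a(c_1,\dots,c_{\ell-1},t,c_{\ell+1},\dots,c_n)\,\prod_{i<\ell}c_i^{a_i}$, each $\varphi_k$ continuous in $t$ and of period $2\pi N$. The one--variable ingredient gives $\varphi_k\equiv 0$ for every $k$ and every choice of the $c_i$, which says precisely that $g_k:=\sum_{a\in S,\,a_\ell=k}f_a\,x_1^{a_1}\cdots x_{\ell-1}^{a_{\ell-1}}$ is identically zero on $\mathbb{R}^n$. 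Since $g_k$ is a relation of the type governed by $P(\ell-1)$, all $f_a$ with $a_\ell=k$ vanish; letting $k$ range completes $P(\ell)$, and $P(n)$ is the theorem. The case of $\mathcal{C}^\infty(\t)[x_1,\dots,x_n]$ is identical, with $N=1$ throughout.

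There is no real obstacle; the only point that needs attention is conceptual. The relation $\sum_a f_a x^a=0$ looks triangular in, say, $x_n$, but one cannot simply equate coefficients because the $f_a$ themselves depend (periodically) on $x_n$. Freezing the remaining coordinates converts it into an honest identity of functions of a single real variable, and there the contrast between the boundedness of periodic functions and the growth of $t^k$ does all the work.
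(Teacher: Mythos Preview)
Your proof is correct and follows the same strategy as the paper: induct over the variables and exploit the contrast between bounded periodic coefficients and polynomial growth to rule out a nontrivial identity. The paper works bottom-up on the degree (arguing that $f_0$ cannot equal $-\sum_{a>0}f_a x_1^{a}$ because the right side ``is not in $\mathcal{C}^\infty(\pt)$'', then factoring out $x_1$), while you isolate the one-variable lemma and work top-down, killing the leading coefficient $\varphi_m$ via its limit at infinity; your version makes explicit exactly the growth argument the paper leaves implicit.
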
  
\begin{proof} Clearly $\mathcal{C}^\infty(\pt)[x_1,\dots ,x_n] = \sum_{a\in \mathbb{N}^n} \mathcal{C}^\infty(\pt)x_1^{a_1}\dots x_n^{a_n}$, so it remains
to show that the sum is direct.

We first observe that $\mathcal{C}^\infty(\pt)[x_1] = \bigoplus_{a\in \mathbb{N}}\mathcal{C}^\infty(\pt)x_1^a$, for if not, there would be a relation $\sum_{a\in \mathbb{N}}f_ax_1^a = 0$, with finitely many of the $f_a$ nonzero. Suppose $f_0$ is nonzero; then the above relation implies that $f_0 = -\sum_{a>0}f_ax_1^a$. This is a contradiction because $f_0$ is in $\mathcal{C}^\infty(\pt)$ while the sum on the right hand side is not. Thus $f_0 = 0$. This implies that the relation above is of the form 
$x_1(\sum_{a>0}f_ax_1^{a-1})=0$. As the function $x_1$ is zero only on a set of measure 0, it follows that $\sum_{a>0}f_ax_1^{a-1}=0$, leading to a contradiction just as above.

Suppose now by induction that $\mathcal{C}^\infty(\pt)[x_1,\dots ,x_{n-1}] =
\bigoplus_{a\in \mathbb{N}^{n-1}}\mathcal{C}^\infty(\pt)x_1^{a_1}\dots x_{n-1}^{a_{n-1}}$, 
and suppose that $\mathcal{C}^\infty(\pt)[x_1,\dots ,x_{n-1},x_n]=\mathcal{C}^\infty(\pt)[x_1,\dots ,x_{n-1}][x_n]$ 
is not a direct sum. Then there is a relation $\sum_{a\in \mathbb{N}}f_ax_n^a = 0$, 
with finitely many of the $f_a$ (in $\mathcal{C}^\infty(\pt)[x_1,\dots ,x_{n-1}]$) nonzero. This again leads to a contradiction as above. Thus $\mathcal{C}^\infty[x_1,\dots ,x_n] = \bigoplus_{a \in \mathbb{N}}\mathcal{C}^\infty(\pt)[x_1,\dots ,x_{n-1}]x_n^a = \bigoplus_{a\in \mathbb{N}^n}\mathcal{C}^\infty(\pt)x_1^{a_1}\dots x_n^{a_n}$. 
\end{proof}  
This lemma allows us to write an element in $\mathcal{C}^\infty(\pt)[x_1,\dots ,x_n]$ {\it uniquely} as a polynomial in the $x_i$'s with coefficients in $\mathcal{C}^\infty(\pt)$.

Define $\mathcal{C}^\infty (\pt)_{\fin}$ to be the $\mathcal{D}$-submodule of $\mathcal{C}^\infty(\pt)$ consisting of those elements $f$ with finite support, i.e. those elements whose Fourier series expansion is a finite sum. Just as above, $\mathcal{C}^\infty (\pt)_{\fin}$
is not an injective $\mathcal{D}$-module. However, the following proposition gives an explicit expression for its injective envelope.

\begin{proposition} The $\mathcal{D}$-module $\mathcal{C}^\infty (\pt)_{\fin}[x_1,\dots ,x_n]$ is an injective envelope of $\mathcal{C}^\infty (\pt)_{\fin}$. Similarly, $\mathcal{C}^\infty (\t)_{\fin}[x_1,\dots ,x_n]$ is 
an injective envelope of $\mathcal{C}^\infty(\t)_{\fin}$. 
 \end{proposition}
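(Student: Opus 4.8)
The plan is to diagonalize the commuting operators $D_1,\dots,D_n$ and thereby reduce everything to the classical description of the injective hull of the residue field of a polynomial ring. First I would observe that each exponential $e^{\imath\langle a,x\rangle}$ with $a=(a_1,\dots,a_n)\in\mathbb{Q}^n$ is a common eigenvector, $D_j(e^{\imath\langle a,x\rangle})=a_je^{\imath\langle a,x\rangle}$, so that $\mathbb{C}\,e^{\imath\langle a,x\rangle}\cong\mathcal{D}/\mathfrak{m}_a$ as a $\mathcal{D}$-module, where $\mathfrak{m}_a=(D_1-a_1,\dots,D_n-a_n)$ is the maximal ideal of $\mathcal{D}=\mathbb{C}[D_1,\dots,D_n]$ at the $\mathbb{Q}$-rational point $a$. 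Since a finitely supported Fourier series has its support inside $\tfrac1N\mathbb{Z}^n$ for a suitable $N$, and since the exponentials are linearly independent, this gives the $\mathcal{D}$-module decomposition $\mathcal{C}^\infty(\pt)_{\fin}=\bigoplus_{a\in\mathbb{Q}^n}\mathbb{C}\,e^{\imath\langle a,x\rangle}\cong\bigoplus_{a\in\mathbb{Q}^n}\mathcal{D}/\mathfrak{m}_a$, and likewise $\mathcal{C}^\infty(\t)_{\fin}\cong\bigoplus_{a\in\mathbb{Z}^n}\mathcal{D}/\mathfrak{m}_a$, the sum now running over integral points.

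Next I would compute what adjoining the coordinate functions does. Lemma 2.2 gives $\mathcal{C}^\infty(\pt)[x_1,\dots,x_n]=\bigoplus_{b\in\mathbb{N}^n}\mathcal{C}^\infty(\pt)x^b$, and since $\mathcal{C}^\infty(\pt)_{\fin}[x_1,\dots,x_n]$ is a submodule of this, the sum $\sum_{b\in\mathbb{N}^n}\mathcal{C}^\infty(\pt)_{\fin}x^b$ defining it is likewise direct; regrouping by the exponential factor yields $\mathcal{C}^\infty(\pt)_{\fin}[x_1,\dots,x_n]=\bigoplus_{a\in\mathbb{Q}^n}e^{\imath\langle a,x\rangle}\mathbb{C}[x_1,\dots,x_n]$. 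From $D_j(e^{\imath\langle a,x\rangle}p)=e^{\imath\langle a,x\rangle}(a_jp+\tfrac1{\imath}\partial_{x_j}p)$ one sees that each $e^{\imath\langle a,x\rangle}\mathbb{C}[x_1,\dots,x_n]$ is a $\mathcal{D}$-submodule and that, on the factor $\mathbb{C}[x_1,\dots,x_n]$, the operator $D_j-a_j$ acts as $\tfrac1{\imath}\,\partial/\partial x_j$. The key claim is then that $e^{\imath\langle a,x\rangle}\mathbb{C}[x_1,\dots,x_n]$ is an injective envelope of $\mathbb{C}\,e^{\imath\langle a,x\rangle}$, i.e.\ an injective hull of $\mathcal{D}/\mathfrak{m}_a$. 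For this I would check two things. (i) Essentiality: given $0\neq e^{\imath\langle a,x\rangle}p$ with $p$ of total degree $d$, pick a monomial $x^{\beta}$ of degree $d$ occurring in $p$; then $\prod_j(D_j-a_j)^{\beta_j}$ sends $e^{\imath\langle a,x\rangle}p$ to a nonzero scalar multiple of $e^{\imath\langle a,x\rangle}$, so every nonzero $\mathcal{D}$-submodule meets $\mathbb{C}\,e^{\imath\langle a,x\rangle}$. (ii) Injectivity: after the harmless change of algebra generators $D_j\mapsto D_j-a_j$ of $\mathcal{D}$ and a rescaling of the $x_i$, this module is precisely $\mathbb{C}[x_1,\dots,x_n]$ with the polynomial variables acting by partial differentiation, which is the classical model (Macaulay's inverse system, the graded $\mathbb{C}$-dual of $\mathcal{D}$) of the injective hull $E_{\mathcal{D}}(\mathcal{D}/\mathfrak{m})$; compare \cite{ob}. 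Because an algebra automorphism of $\mathcal{D}$ carries injectives to injectives, injectivity transfers to every $e^{\imath\langle a,x\rangle}\mathbb{C}[x_1,\dots,x_n]$.

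To conclude, I would put the summands back together: $\mathcal{C}^\infty(\pt)_{\fin}[x_1,\dots,x_n]=\bigoplus_{a\in\mathbb{Q}^n}E_{\mathcal{D}}(\mathcal{D}/\mathfrak{m}_a)$. Since $\mathcal{D}$ is Noetherian, any direct sum of injective $\mathcal{D}$-modules is injective; and a direct sum of essential extensions is essential (an element with only finitely many nonzero coordinates can be driven into the submodule one coordinate at a time while remaining nonzero). Hence $\bigoplus_{a}E_{\mathcal{D}}(\mathcal{D}/\mathfrak{m}_a)$ is an injective module containing $\bigoplus_{a}\mathbb{C}\,e^{\imath\langle a,x\rangle}=\mathcal{C}^\infty(\pt)_{\fin}$ as an essential submodule, which is exactly the statement that it is an injective envelope. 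Replacing the index set $\mathbb{Q}^n$ by $\mathbb{Z}^n$ throughout gives the assertion for $\mathcal{C}^\infty(\t)_{\fin}[x_1,\dots,x_n]$.

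\textbf{Main obstacle.} The heart of the argument is step (ii): recognizing $e^{\imath\langle a,x\rangle}\mathbb{C}[x_1,\dots,x_n]$ as the injective hull of $\mathcal{D}/\mathfrak{m}_a$ rather than merely as an essential extension. One must be careful that ``injective'' is meant in the purely algebraic sense (there is no topology on $\mathcal{C}^\infty(\pt)_{\fin}$ to worry about here), and that the twist by $e^{\imath\langle a,x\rangle}$ together with the factor $\tfrac1{\imath}$ genuinely yields an isomorphism of $\mathcal{D}$-modules with the classical differentiation module, not just of the underlying vector spaces. Everything else --- the eigenspace decomposition and the two pieces of standard homological nonsense --- is routine.
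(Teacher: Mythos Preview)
Your argument is correct, and it is organized a bit differently from the paper's proof. Both proofs share the essentiality step---applying $(D_1-a_1)^{m_1}\cdots(D_n-a_n)^{m_n}$ to reach a nonzero multiple of $e^{\imath\langle a,x\rangle}$---but they diverge on how injectivity is obtained. The paper works ``from above'': it invokes the Malgrange--Palamodov Fundamental Principle (and Oberst) to know that the minimal injective cogenerator $\min=\mathbb{C}[\{e^{\imath\langle a,x\rangle}\}_{a\in\mathbb{C}^n},x_1,\dots,x_n]$ is injective, and then exhibits the explicit $\mathcal{D}$-linear projection $\pi:\min\to\mathcal{C}^\infty(\pt)_{\fin}[x_1,\dots,x_n]$ that discards all exponentials with non-rational frequencies, so that the target is a direct summand of an injective and hence injective. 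You instead work ``from below'': you decompose $\mathcal{C}^\infty(\pt)_{\fin}[x_1,\dots,x_n]$ as $\bigoplus_{a\in\mathbb{Q}^n}e^{\imath\langle a,x\rangle}\mathbb{C}[x_1,\dots,x_n]$, identify each summand with the Macaulay inverse-system model of $E_\mathcal{D}(\mathcal{D}/\mathfrak{m}_a)$, and then use that a direct sum of injectives over a Noetherian ring is injective.

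What each buys: your route is purely algebraic and self-contained---no appeal to the analytic Fundamental Principle is needed, only the classical identification of the injective hull of the residue field of a polynomial ring. The paper's route, on the other hand, situates the result inside the analytic picture: it makes visible that $\mathcal{C}^\infty(\pt)_{\fin}[x_1,\dots,x_n]$ is literally a direct summand of the minimal injective cogenerator sitting in $\mathcal{C}^\infty(\mathbb{R}^n)$, which is thematically useful later in the paper. Your concern in the ``Main obstacle'' paragraph is well placed but easily discharged: the conjugation $D_j\mapsto D_j-a_j$ is an algebra automorphism of $\mathcal{D}$, and the scalar $\tfrac1\imath$ is absorbed by rescaling the variables, so the identification with the differentiation module is a genuine $\mathcal{D}$-module isomorphism.
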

\begin{proof} The Fundamental Principle of Malgrange - Palamodov states that
$\mathcal{C}^\infty (\mathbb{R}^n)$ is an injective $\mathcal{D}$-module. It is also a cogenerator (Oberst \cite{ob}). From this it follows that its submodule 
$\min:=\mathbb{C}[\{e^{\imath<a,x>}\}_{a\in \mathbb{C}^n},
x_1,\dots ,x_n]$ is the direct sum of the injective envelopes $E(\mathcal{D}/\frak{m})$ of the modules $\mathcal{D}/\frak{m}$,
where $\frak{m}$ varies over the set $\{ (D_1-a_1, \ldots , D_n-a_n), a=(a_1, \ldots ,a_n) \in \mathbb{C}^n \}$ of maximal ideals of $\mathcal{D}$. Thus this
module is again injective, and is in fact a minimal injective cogenerator over
$\mathcal{D}$, unique up to isomorphism (see \cite{ob2} for more details). The elements of $\min$ are  finite sums 
$\sum _{a\in \mathbb{C}^n}p_a(x)
e^{\imath<a,x>}$, where the $p_a(x)$
are polynomials in $x_1,\dots ,x_n$. Define the map 
$\pi: \min \rightarrow \mathcal{C}^\infty (\pt)_{\fin}[x_1,\dots ,x_n]$ by 
\[\pi (\sum _{a\in \mathbb{C}^n}p_a(x)
e^{\imath<a,x>}) =\sum _{a\in \mathbb{Q}^n}p_a(x) e^{\imath<a,x>}\]

Clearly $\pi$ is a $\mathbb{C}$-linear projection; it also
commutes with the operators $D_j, \ j=1,\dots ,n$. Thus $\pi$
is a morphism of $\mathcal{D}$-modules which splits the inclusion 
$i:\mathcal{C}^\infty(\pt)_\fin[x_1,\dots,x_n] \rightarrow \min$. It follows that $\mathcal{C}^\infty (\pt)_{\fin}[x_1,\dots ,x_n]$ is a direct summand of $\min$, hence an injective $\mathcal{D}$-module. Moreover, the extension of
modules $\mathcal{C}^\infty (\pt)_{\fin}\subset \mathcal{C}^\infty (\pt)_{\fin}[x_1,\dots ,x_n]$ is essential. Indeed, consider a term $f=x_1^{m_1}\cdots x_n^{m_n}
e^{\imath<a,x>}$ with $a\in \mathbb{Q}^n$. As $(D_j-a_j)(x_je^{\imath <a,x>})=\frac{1}{\imath}e^{\imath <a,x>}$, it follows that
$(D_1-a_1)^{m_1}\cdots (D_n-a_n)^{m_n}(f)=ce^{\imath<a,x>}$, for some nonzero constant $c$. Thus we conclude that
 $\mathcal{C}^\infty (\pt)_{\fin}[x_1,\dots ,x_n]$ is an injective envelope of $\mathcal{C}^\infty (\pt)_{\fin}$. \end{proof}

\begin{observations} {\rm (1) $\mathcal{C}^\infty (\pt)\subset \mathcal{C}^\infty (\pt)[x_1,\dots ,x_n]$ is {\it not} an essential extension. Indeed, consider
 $f=x_1\sum _{a\in \mathbb{Z}^n} c_ae^{\imath<a,x>}$ in $\mathcal{C}^\infty (\pt)[x_1,\dots ,x_n]$ 
 with $c_a\in \mathbb{C}$ and all
 $c_a\neq 0$. For any $L(D)\in \mathcal{D}$,  
 $L(D)f=x_1\sum c_aL(a_1,\dots ,a_n) e^{\imath<a,x>}+$
 (an element of $\mathcal{C}^\infty (\pt)$). Thus $L(D)f\in \mathcal{C}^\infty (\pt)$ implies $L=0$ (no nonzero polynomial can vanish at every integral point).\\
(2) The polynomials in $x_1,\dots ,x_n$ have no interpretation as functions on the
protorus $\pt$, but are functions on the space $\mathbb{R}^n$, which  can be seen as the universal covering  of the protorus.} \end{observations}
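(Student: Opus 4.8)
The plan is to treat (1) and (2) separately; only (1) requires an argument, while (2) is a definitional remark.

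For (1), I would disprove essentiality by exhibiting a single nonzero cyclic $\mathcal{D}$-submodule of $\mathcal{C}^\infty(\pt)[x_1,\dots,x_n]$ whose intersection with $\mathcal{C}^\infty(\pt)$ is $0$. Take $g=\sum_{a\in\mathbb{Z}^n}c_a e^{\imath<a,x>}$ with every coefficient $c_a$ nonzero and the family $(c_a)$ rapidly decreasing, so $g\in\mathcal{C}^\infty(\t)\subset\mathcal{C}^\infty(\pt)$, and put $f=x_1 g$. By the uniqueness statement following Lemma 2.2, $f\notin\mathcal{C}^\infty(\pt)$, since its unique expansion has a genuine $x_1$-term. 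It remains to check that $\mathcal{D}f\cap\mathcal{C}^\infty(\pt)=0$.

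The crux is the action of $\mathcal{D}$ on the terms $x_1 e^{\imath<a,x>}$. A Leibniz computation (the same one used in the proof of Proposition 2.5) gives $D_j(x_1 e^{\imath<a,x>})=a_j\,x_1 e^{\imath<a,x>}$ for $j\neq1$ and $D_1(x_1 e^{\imath<a,x>})=a_1\,x_1 e^{\imath<a,x>}+\tfrac1\imath\, e^{\imath<a,x>}$; iterating, $L(D)(x_1 e^{\imath<a,x>})=L(a)\,x_1 e^{\imath<a,x>}+\gamma_a(L)\,e^{\imath<a,x>}$ for some scalar $\gamma_a(L)$ depending polynomially on $a$. Summing over $a$ — the families $(c_a L(a))$ and $(c_a\gamma_a(L))$ are again rapidly decreasing, being products of a rapidly decreasing family with a polynomially bounded one — one obtains $L(D)f=x_1\sum_a c_a L(a)e^{\imath<a,x>}+h$ with $h\in\mathcal{C}^\infty(\pt)$. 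By the directness of the decomposition in Lemma 2.2, $L(D)f\in\mathcal{C}^\infty(\pt)$ forces its $x_1$-coefficient $\sum_a c_a L(a)e^{\imath<a,x>}$ to vanish, i.e. $c_a L(a)=0$ for all $a\in\mathbb{Z}^n$; as every $c_a\neq0$, this says $L$ vanishes on $\mathbb{Z}^n$, hence $L=0$ because no nonzero polynomial vanishes at every integral point. Thus $L(D)f\in\mathcal{C}^\infty(\pt)$ only when $L(D)f=0$, so $\mathcal{D}f\cap\mathcal{C}^\infty(\pt)=0$ while $f\in\mathcal{D}f$ is nonzero, and $\mathcal{C}^\infty(\pt)\subset\mathcal{C}^\infty(\pt)[x_1,\dots,x_n]$ is not essential.

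For (2) there is essentially nothing to prove: a nonconstant polynomial — in particular each coordinate function $x_j$ — is not invariant under translation by $2\pi N\mathbb{Z}^n$ for any $N\geq1$ (such a translation changes $x_j$ by a nonzero constant), so it does not descend to any $\mathbb{R}^n/2\pi N\mathbb{Z}^n$ and hence defines no continuous function on $\pt=\underset{\leftarrow}{\lim}\ \mathbb{R}^n/2\pi N\mathbb{Z}^n$; equivalently, $\pt$ being compact, every element of $\mathcal{C}(\pt)$ is bounded whereas a nonconstant polynomial is unbounded on $\mathbb{R}^n$. The indeterminates $x_1,\dots,x_n$ are nonetheless honest smooth functions on $\mathbb{R}^n$, and the map $\mathbb{R}^n\to\t$ (through which, via the dense image of $\mathbb{R}^n$ in $\pt$, one covers the protorus) realizes $\mathbb{R}^n$ as the universal cover; this is what gives $\mathcal{C}^\infty(\pt)[x_1,\dots,x_n]$ its meaning as a $\mathcal{D}$-submodule of $\mathcal{C}^\infty(\mathbb{R}^n)$.

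The only real content is part (1); its main point is the ``triangular'' action of $L(D)$ on $x_1 e^{\imath<a,x>}$, with $L(a)$ as the coefficient of the $x_1$-component, after which one finishes with the elementary arithmetic fact that a nonzero polynomial cannot vanish on $\mathbb{Z}^n$. The step needing slight care is the appeal to Lemma 2.2: directness of the sum is used both to see $f\notin\mathcal{C}^\infty(\pt)$ and to read off that the $x_1$-coefficient of $L(D)f$ must vanish, and one should note in passing that the correction term $h$ genuinely lies in $\mathcal{C}^\infty(\pt)$. I do not foresee a substantive obstacle.
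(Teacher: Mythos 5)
Your argument for (1) is correct and is the same as the paper's: you take the very element $f=x_1\sum_a c_a e^{\imath\langle a,x\rangle}$ (with all $c_a\neq 0$) suggested in the statement, compute $L(D)f = x_1\sum_a c_aL(a)e^{\imath\langle a,x\rangle}+h$, and use the fact that no nonzero polynomial vanishes on all of $\mathbb{Z}^n$; you merely make explicit the implicit appeals to Lemma 2.2 for reading off the $x_1$-coefficient and the rapid-decay check showing $h\in\mathcal{C}^\infty(\pt)$, neither of which the paper spells out but both of which are needed and correct. Part (2) is, as you say, a definitional remark and your justification (non-periodicity, or boundedness of continuous functions on the compact $\pt$) is fine.
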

  
\begin{lemma} Let $n=1$. Then $\mathcal{C}^\infty (\t)[x]$ is an injective $\mathcal{D}=\mathbb{C}[D]$-module, where $D=\frac{1}{\imath}\frac{d}{dx}$. 
Thus for $a \not \in \mathbb{Z}$, the map $D-a$ is bijective on
$\mathcal{C}^\infty (\t)[x]$. For $a \in \mathbb{Z}$ the kernel of 
$D-a$ on $\mathcal{C}^\infty (\t)[x]$ is $\mathbb{C}e^{\imath ax}$.

There is exactly one injective envelope of $\mathcal{C}^\infty (\t)$ in $\mathcal{C}^\infty (\t)[x]$, and it consists of the elements $\sum _{j\geq 0}f_jx^j$ such that $f_j$ has finite support for $j\geq 1$. 

Similar statements hold for $\mathcal{C}^\infty(\pt)$ replacing $\mathcal{C}^\infty (\t)$ and $\mathbb{Q}$ replacing $\mathbb{Z}$.
\end{lemma}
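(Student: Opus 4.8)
The plan is to exploit that for $n=1$ the ring $\mathcal{D}=\mathbb{C}[D]$ is a principal ideal domain, so a $\mathcal{D}$-module is injective precisely when it is divisible, and to read everything off from Fourier series, where $D$ acts by $D(\sum_k c_ke^{\imath kx})=\sum_k kc_ke^{\imath kx}$. First I would record how $D-a$ acts on $\mathcal{C}^\infty(\t)$ itself: if $a\notin\mathbb{Z}$ then $|k-a|\geq\mathrm{dist}(a,\mathbb{Z})>0$ for all $k\in\mathbb{Z}$, so dividing Fourier coefficients by $k-a$ preserves rapid decrease and $D-a$ is bijective; if $a=k_0\in\mathbb{Z}$ then $\ker(D-k_0)=\mathbb{C}e^{\imath k_0x}$ and the image is exactly the functions whose $k_0$-th Fourier coefficient vanishes. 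Hence on the quotient $\mathcal{C}^\infty(\t)/\mathcal{C}^\infty(\t)_{\fin}$ every $D-a$, $a\in\mathbb{C}$, is bijective (the kernel and the one-dimensional cokernel of $D-k_0$ both lie in $\mathcal{C}^\infty(\t)_{\fin}$), so every nonzero element of $\mathcal{D}$ acts invertibly there; thus $\mathcal{C}^\infty(\t)/\mathcal{C}^\infty(\t)_{\fin}$ is a $\mathbb{C}(D)$-vector space, in particular an injective $\mathcal{D}$-module. I will also use the elementary fact that if $L\neq0$ and $L(D)h$ has finite Fourier support then so does $h$, since $L(k)\neq0$ for all but finitely many $k$.

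Working in $\mathcal{C}^\infty(\t)[x]=\bigoplus_{j\geq0}\mathcal{C}^\infty(\t)x^j$ (the earlier lemma) with the commutation rule $(D-a)(x^jg)=x^j(D-a)g+\tfrac{j}{\imath}x^{j-1}g$, a downward induction on the $x$-degree gives $\mathcal{C}^\infty(\t)[x]_{\tor}=\mathcal{C}^\infty(\t)_{\fin}[x]$, and a second downward induction gives the kernel statements of the lemma: for $a\notin\mathbb{Z}$, the leading coefficient $g_m$ of a kernel element satisfies $(D-a)g_m=0$ hence $g_m=0$, and inductively all coefficients vanish, so $D-a$ is injective on $\mathcal{C}^\infty(\t)[x]$; for $a=k_0\in\mathbb{Z}$, $g_m$ must be a multiple of $e^{\imath k_0x}$, and the next equation $(D-k_0)g_{m-1}=-\tfrac{m}{\imath}g_m$ then forces $g_m=0$ because $e^{\imath k_0x}$ is not in the image of $D-k_0$, leaving $\ker(D-k_0)=\mathbb{C}e^{\imath k_0x}$. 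To prove $\mathcal{C}^\infty(\t)[x]$ injective I would avoid a direct divisibility computation: $\mathcal{C}^\infty(\t)_{\fin}[x]=\bigoplus_j\mathcal{C}^\infty(\t)_{\fin}x^j$ is a $\mathcal{D}$-submodule, injective by the $n=1$ case of the Proposition, and the quotient $\mathcal{C}^\infty(\t)[x]/\mathcal{C}^\infty(\t)_{\fin}[x]=\bigoplus_j(\mathcal{C}^\infty(\t)/\mathcal{C}^\infty(\t)_{\fin})x^j$ carries, for each $a$, a triangular action of $D-a$ whose diagonal blocks are the bijective operator $D-a$ on $\mathcal{C}^\infty(\t)/\mathcal{C}^\infty(\t)_{\fin}$, hence is again a $\mathbb{C}(D)$-vector space; a short exact sequence of $\mathcal{D}$-modules with both ends injective splits and exhibits the middle term as their direct sum, so $\mathcal{C}^\infty(\t)[x]$ is injective. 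Divisibility then gives surjectivity of each $D-a$, which with the injectivity above gives bijectivity for $a\notin\mathbb{Z}$.

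For the envelope, set $E:=\{\sum_jf_jx^j\mid f_j\in\mathcal{C}^\infty(\t)_{\fin}\text{ for }j\geq1\}=\mathcal{C}^\infty(\t)+\mathcal{C}^\infty(\t)_{\fin}[x]$; the commutation rule shows $E$ is a $\mathcal{D}$-submodule, and since $\mathcal{C}^\infty(\t)\cap\mathcal{C}^\infty(\t)_{\fin}[x]=\mathcal{C}^\infty(\t)_{\fin}$ there is a short exact sequence $0\to\mathcal{C}^\infty(\t)_{\fin}[x]\to E\to\mathcal{C}^\infty(\t)/\mathcal{C}^\infty(\t)_{\fin}\to0$ with injective ends, so $E$ is injective. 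That $\mathcal{C}^\infty(\t)\subset E$ is essential I would verify element by element: given $0\neq\phi\in E$, if $\phi$ is not torsion then there is $r\neq0$ with $0\neq r\phi\in\mathcal{C}^\infty(\t)$, because $E/\mathcal{C}^\infty(\t)\cong\mathcal{C}^\infty(\t)_{\fin}[x]/\mathcal{C}^\infty(\t)_{\fin}$ is a torsion module; if $\phi$ is torsion with monic annihilator $s$, then for a root $a$ of $s$ and $\tilde s:=s/(D-a)$ one has $0\neq\tilde s\phi\in\ker(D-a)$, which by the kernel computation equals $\mathbb{C}e^{\imath ax}\subset\mathcal{C}^\infty(\t)$ (necessarily $a\in\mathbb{Z}$, since otherwise that kernel is $0$). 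Hence $E$ is an injective envelope of $\mathcal{C}^\infty(\t)$ inside $\mathcal{C}^\infty(\t)[x]$. For uniqueness, let $E'$ be another such submodule; its torsion submodule $E'_{\tor}=E'\cap\mathcal{C}^\infty(\t)[x]_{\tor}=E'\cap\mathcal{C}^\infty(\t)_{\fin}[x]$ is injective and lies between $\mathcal{C}^\infty(\t)_{\fin}$ and its injective envelope $\mathcal{C}^\infty(\t)_{\fin}[x]$, hence equals the latter (an injective submodule of an injective envelope of $N$ that still contains $N$ is the whole envelope); so $E'\supseteq\mathcal{C}^\infty(\t)+\mathcal{C}^\infty(\t)_{\fin}[x]=E$, and then $E\subseteq E'$ with $E'$ essential over $\mathcal{C}^\infty(\t)$, hence over $E$, and $E$ injective forces $E'=E$.

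The protorus statements follow verbatim, with Fourier support in $\mathbb{Q}$ (equivalently in $\tfrac1N\mathbb{Z}$ for varying $N$) replacing support in $\mathbb{Z}$ and $\mathbb{Q}$ replacing $\mathbb{Z}$; the only point needing a remark is that for $a\notin\mathbb{Q}$ one still has $|k-a|\geq\mathrm{dist}(a,\tfrac1N\mathbb{Z})>0$ on the support of any given $f\in\mathcal{C}^\infty(\pt)$, so $D-a$ is again bijective on $\mathcal{C}^\infty(\pt)$. I expect the main obstacle to be not a single deep step but the combined bookkeeping: identifying $\mathcal{C}^\infty(\t)[x]_{\tor}$ exactly as $\mathcal{C}^\infty(\t)_{\fin}[x]$ together with the two kernels via the $x$-degree inductions, and arranging the injectivity of $\mathcal{C}^\infty(\t)[x]$ and the uniqueness so as to avoid circularity — the conceptual heart being that the failure of surjectivity of $D-k_0$ on $\mathcal{C}^\infty(\t)$ at integers $k_0$ is exactly absorbed on passing to $\mathcal{C}^\infty(\t)/\mathcal{C}^\infty(\t)_{\fin}$.
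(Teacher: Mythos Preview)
Your proof is correct, and it takes a genuinely different route from the paper's.

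For injectivity of $\mathcal{C}^\infty(\t)[x]$, the paper argues divisibility directly: it writes down a preimage of $g$ under $D-a$ via the variation-of-constants formula $f(x)=e^{\imath ax}\int_0^x e^{-\imath at}g(t)\,dt$ and asserts this lies in $\mathcal{C}^\infty(\t)[x]$. Your argument is purely module-theoretic: you sandwich $\mathcal{C}^\infty(\t)[x]$ between the injective submodule $\mathcal{C}^\infty(\t)_{\fin}[x]$ (from the earlier Proposition) and a quotient that you recognize as a $\mathbb{C}(D)$-vector space, then split the short exact sequence. This is longer but conceptually cleaner, and it sidesteps a delicacy in the paper's formula (for $a\notin\mathbb{Z}$ the displayed $f$ contains an $e^{\imath ax}$ term, so one must tacitly adjust the integration constant to land in $\mathcal{C}^\infty(\t)[x]$).

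For the injective envelope, the paper invokes the Matlis decomposition $\mathcal{C}^\infty(\t)[x]=\bigoplus_{a\in\mathbb{Z}}\mathbb{C}[x]e^{\imath ax}\oplus\mathcal{V}$ and then shows, by the same downward $x$-degree argument you use, that any essential extension of $\mathcal{C}^\infty(\t)$ forces the higher coefficients $f_j$ to have finite support. You instead build $E=\mathcal{C}^\infty(\t)+\mathcal{C}^\infty(\t)_{\fin}[x]$ directly, prove it injective by another short exact sequence, verify essentiality by a torsion/non-torsion case split, and obtain uniqueness from the observation that any envelope $E'$ must contain the full torsion part $\mathcal{C}^\infty(\t)_{\fin}[x]$ (since the torsion of an injective module over a PID is again injective). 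Both arguments hinge on the same underlying computation $\mathcal{C}^\infty(\t)[x]_{\tor}=\mathcal{C}^\infty(\t)_{\fin}[x]$; the paper reaches it via Matlis, you via the downward induction. Your approach trades the appeal to Matlis theory for a longer but more self-contained chain of elementary module-theoretic steps.
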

\begin{proof} Since $n=1$, injectivity is equivalent to divisibility. Thus it suffices to
show that $(D-a): \mathcal{C}^\infty (\t)[x]\rightarrow \mathcal{C}^\infty (\t)[x]$ is surjective for every $a \in \mathbb{C}$. But if 
$g=\sum_{j=1}^kg_jx^j$ is an element of $\mathcal{C}^\infty(\t)[x]$, where 
the $g_j$ are in $\mathcal{C}^\infty(\t)$, then an 
$f$ such that $(D-a)f=g$ is, by the `variation of constants' formula, 
given by $f(x)=e^{\imath ax}\int_0^xe^{-\imath at}g(t)dt$, which is again in $\mathcal{C}^\infty(\t)[x]$.  

Now, the theory of Matlis \cite{mt} applied to the case of this injective module $\mathcal{C}^\infty (\t)[x]$, states that it admits a decomposition
\[
\mathcal{C}^\infty (\t)[x] = \bigoplus_{a\in \mathbb{Z}}\mathbb{C}[x]e^{\imath ax} \bigoplus \mathcal{V}
\]
where the torsion module $\tor (\mathcal{C}^\infty (\t)[x])$ of $\mathcal{C}^\infty (\t)[x]$ equals $\bigoplus_{a\in \mathbb{Z}}\mathbb{C}[x]e^{\imath ax}$ and where the module $\mathcal{V}\simeq \mathcal{C}^\infty (\t)[x]/\tor (\mathcal{C}^\infty (\t)[x])$ is injective and torsion free (see also \cite{ob2}). In general $\mathcal{V}$ is not unique, and one can only speak of {\it an} injective envelope of $\mathcal{C}^\infty(\t)$ in $\mathcal{C}^\infty (\t)[x]$; nonetheless it turns out for the case at hand that there is exactly one injective envelope as described in the statement.

This follows from the fact that an injective envelope of $\mathbb{C}e^{\imath ax}$ is $\mathbb{C}[x]e^{\imath ax}$; thus as $\mathbb{C}e^{\imath ax}$ is contained in $\mathcal{C}^\infty(\t)$, the above decomposition implies that any injective envelope $\mathcal{E}$ of $\mathcal{C}^\infty(\t)$ in $\mathcal{C}^\infty (\t)[x]$ must satisfy
\[
\bigoplus_{a\in \mathbb{Z}}\mathbb{C}[x]e^{\imath ax} + \mathcal{C}^\infty (\t) \subseteq \mathcal{E}=\bigoplus_{a\in \mathbb{Z}}\mathbb{C}[x]e^{\imath ax} \bigoplus (\mathcal{V}\bigcap\mathcal{E})
\]
But if an element $f=\sum _{j=0}^kf_jx^j$ in
$\mathcal{C}^\infty (\t)[x]$ belongs to $\mathcal{E}$, then $0 \neq L(D)f \in \mathcal{C}^\infty (\t)$ for some $L(\mathcal{D})$ in $\mathcal{D}$.  
Now suppose that $k\geq 1$. Since  
$L(D)f=(L(D)f_k)x^k+$ (terms of lower degree in $x$), it follows that $L(D)f_k=0$ and therefore that $f_k$ has finite support  $\{a_1,\dots ,a_s\}$. Then
$M(D):=(D-a_1)\cdots (D-a_s)$ satisfies  $M(D)f_k=0$. After replacing $f$ by 
$M(D)f$, induction with respect to $k$ implies that $f_1, \dots ,f_k$ all have finite support. Thus 
\[
\bigoplus_{a\in \mathbb{Z}}\mathbb{C}[x]e^{\imath ax} + \mathcal{C}^\infty (\t) \subseteq \mathcal{E} \subseteq \bigoplus_{a\in \mathbb{Z}}\mathbb{C}[x]xe^{\imath ax}\bigoplus \mathcal{C}^\infty(\t)  
\]
which implies equality throughout. This proves the second statement.  
  
The corresponding statements for the protorus follow from the fact that $\mathcal{C}^\infty(\pt)$ is the union of its
subspaces $\mathcal{C}^\infty (\mathbb{R}/N2\pi \mathbb{Z}), N \geq 1$.    \end{proof}

\begin{proposition} The spaces $\mathcal{C}^\infty (\t)_{\fin}[x_1,\dots ,x_n] \subset \mathcal{C}^\infty(\t) [x_1,\dots ,x_n]$
define the same Willems closure. The same holds for the inclusion of the two signal spaces
$\mathcal{C}^\infty (\t)_{\fin}\subset \mathcal{C}^\infty(\t)$. These statements remain valid for $\pt$ replacing $\t$.
 \end{proposition}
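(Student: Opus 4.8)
The plan is to deduce all four statements from the last Corollary of Section~1: for nested signal spaces $\mathcal{F}_0 \subseteq \mathcal{F}$, every submodule $\mathcal{M} \subseteq \mathcal{D}^q$ has the same Willems closure with respect to $\mathcal{F}_0$ and to $\mathcal{F}$ as soon as each nonzero $a \in \mathcal{F}$ is mapped to something nonzero by some $\mathcal{D}$-module homomorphism $m \colon \mathcal{F} \to \mathcal{F}_0$. So the whole task reduces to producing, in each of the four cases, enough such homomorphisms, and the natural candidates are the projections onto a single Fourier frequency.

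For $\mathcal{C}^\infty(\t)_{\fin} \subseteq \mathcal{C}^\infty(\t)$, I would fix $b \in \mathbb{Z}^n$ and let $\pi_b$ send $\sum_{c \in \mathbb{Z}^n} \lambda_c e^{\imath<c,x>}$ to $\lambda_b e^{\imath<b,x>}$. Because $D_j(e^{\imath<c,x>}) = c_j e^{\imath<c,x>}$, every $D_j$ respects the Fourier grading, so $\pi_b$ commutes with each $D_j$ and is a $\mathcal{D}$-module map whose image lies in $\mathbb{C}e^{\imath<b,x>} \subseteq \mathcal{C}^\infty(\t)_{\fin}$; and if $a \neq 0$ then some $\lambda_b \neq 0$, so $\pi_b(a) \neq 0$. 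For the polynomial spaces $\mathcal{C}^\infty(\t)_{\fin}[x_1,\dots,x_n] \subseteq \mathcal{C}^\infty(\t)[x_1,\dots,x_n]$, I would use the first Lemma of Section~2 to write $a$ uniquely as a finite sum $\sum_\alpha f_\alpha x_1^{\alpha_1}\cdots x_n^{\alpha_n}$ with $f_\alpha = \sum_c \lambda_{\alpha,c} e^{\imath<c,x>}$ in $\mathcal{C}^\infty(\t)$, and define $\pi_b(a) = \sum_\alpha \lambda_{\alpha,b} x_1^{\alpha_1}\cdots x_n^{\alpha_n} e^{\imath<b,x>}$; this is a finite combination of the $x^\alpha e^{\imath<b,x>}$, hence lies in $\mathbb{C}[x_1,\dots,x_n]e^{\imath<b,x>} \subseteq \mathcal{C}^\infty(\t)_{\fin}[x_1,\dots,x_n]$. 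The identity $D_j(x^\alpha e^{\imath<c,x>}) = \tfrac{\alpha_j}{\imath} x^{\alpha - e_j} e^{\imath<c,x>} + c_j x^\alpha e^{\imath<c,x>}$ shows that $D_j$ keeps each subspace $\mathbb{C}[x_1,\dots,x_n]e^{\imath<c,x>}$ invariant, and together with the uniqueness of the polynomial representation this yields $\pi_b(D_j a) = D_j \pi_b(a)$; nonvanishing follows as before, picking $\alpha_0$ with $f_{\alpha_0} \neq 0$ and then $b$ with $\lambda_{\alpha_0,b} \neq 0$, so that the term $\lambda_{\alpha_0,b} x^{\alpha_0} e^{\imath<b,x>}$ survives in $\pi_b(a)$.

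Applying the Corollary of Section~1 in each case then proves the two statements over $\t$. The two statements over $\pt$ are obtained by the very same argument, with $\mathbb{Q}^n$ in place of $\mathbb{Z}^n$ (for a given $a$ one only needs frequencies $b$ in the finitely generated support lattice $\tfrac1N\mathbb{Z}^n$ of $a$). The single point requiring any care is the $\mathcal{D}$-linearity of $\pi_b$ on the polynomial ring --- one has to notice that $D_j$ does not mix distinct exponential frequencies even though it moves individual monomials --- but this is routine once the above identity is written down, so I do not expect a genuine obstacle.
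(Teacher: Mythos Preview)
Your proposal is correct and follows essentially the same approach as the paper: define, for each $b\in\mathbb{Z}^n$ (respectively $\mathbb{Q}^n$), the projection $m_b$ onto the single Fourier component $p_b(x)e^{\imath\langle b,x\rangle}$, observe that it is a $\mathcal{D}$-module map into the finite-support subspace, and invoke Corollary~1.2. Your $\pi_b$ coincides with the paper's $m_b$ once one identifies the representation $\sum_\alpha f_\alpha x^\alpha$ with $\sum_a p_a(x)e^{\imath\langle a,x\rangle}$; you simply supply more detail on $\mathcal{D}$-linearity and nonvanishing than the paper does.
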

\begin{proof} For a $b\in \mathbb{Z}^n$, define the homomorphism
\[m_b:\mathcal{C}^\infty(\t) [x_1,\dots ,x_n]\rightarrow  \mathcal{C}^\infty(\t)_{\fin}[x_1,\dots ,x_n]\]
 by $m_b (\sum _{a\in \mathbb{Z}^n}
p_a(x)e^{\imath<a,x>} )= p_b(x)e^{\imath<b,x>}.$ The first statement now follows from Corollary 1.2. The other cases are similar. \end{proof}

\begin{theorem} For $n>1$, the $\mathcal{D}$-modules $\mathcal{C}^\infty(\t)[x_1,\dots ,x_n]$ and 
 $\mathcal{C}^\infty (\pt)[x_1,\dots ,x_n]$ are not divisible (and therefore not injective).
 \end{theorem}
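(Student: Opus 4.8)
The plan is to exhibit one nonzero operator $L\in\mathcal{D}$ that fails to be surjective on each of the two modules. Since an injective module over the domain $\mathcal{D}$ is divisible, and divisibility means exactly that multiplication by each nonzero element of $\mathcal{D}$ is onto, this proves both assertions at once; moreover it suffices to run a single argument, writing $\mathcal{F}$ for either $\mathcal{C}^\infty(\t)$ or $\mathcal{C}^\infty(\pt)$, since all Fourier frequencies appearing below will be integral and hence legitimate in both settings. As the parenthetical remark preceding the statement suggests, the obstruction is arithmetic. Fix a Liouville number $\alpha\in\mathbb{R}$, that is, a real irrational admitting integers $p_m,q_m$ ($m\geq1$) with $q_m\geq1$, $q_m\to\infty$ and $0<|p_m-\alpha q_m|<q_m^{-m}$, and set $L:=D_1-\alpha D_2\in\mathcal{D}$, which is nonzero because $n>1$. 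Recall that $L$ is diagonal on Fourier modes: $L\bigl(\sum_b c_b e^{\imath\langle b,x\rangle}\bigr)=\sum_b(b_1-\alpha b_2)\,c_b\,e^{\imath\langle b,x\rangle}$.

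First I would produce a signal $g\in\mathcal{F}$ lying outside the image of $L$ on $\mathcal{F}$. Put $b^{(m)}:=(p_m,q_m,0,\dots,0)\in\mathbb{Z}^n$ and
\[ g:=\sum_{m\geq1}|p_m-\alpha q_m|\,e^{\imath\langle b^{(m)},x\rangle}. \]
Because $|p_m-\alpha q_m|<q_m^{-m}$ while $1+\sum_j|b^{(m)}_j|$ is bounded by a fixed multiple of $q_m$, the coefficients of $g$ satisfy the required rapid-decrease condition, so $g\in\mathcal{C}^\infty(\t)\subseteq\mathcal{F}$. If some $w=\sum_b c_b e^{\imath\langle b,x\rangle}\in\mathcal{F}$ satisfied $Lw=g$, then equating Fourier coefficients at $b^{(m)}$ would give $(p_m-\alpha q_m)\,c_{b^{(m)}}=|p_m-\alpha q_m|$, hence $|c_{b^{(m)}}|=1$ for all $m$; as $|b^{(m)}|\to\infty$, this contradicts the rapid decrease of the coefficients of $w$. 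Therefore $g\notin L\mathcal{F}$.

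Next I would check that adjoining $x_1,\dots,x_n$ does not help. Decompose $\mathcal{F}=K\oplus W$, where $K$ consists of the elements whose Fourier support lies in $\{\,b : b_1=b_2=0\,\}$ and $W$ of those whose support avoids it. Each $D_j$ maps $K$ into $K$ and $W$ into $W$, so, writing every element of $\mathcal{F}[x_1,\dots,x_n]$ uniquely as $\sum_a f_a x^a$ with $f_a\in\mathcal{F}$ (as established earlier), one obtains a splitting $\mathcal{F}[x_1,\dots,x_n]=K[x_1,\dots,x_n]\oplus W[x_1,\dots,x_n]$ into $\mathcal{D}$-submodules that $L$ preserves. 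Crucially, $L$ is injective on $W$: if $b$ has $(b_1,b_2)\neq(0,0)$ then $b_1-\alpha b_2\neq0$, since $\alpha$ is irrational. Now suppose $f\in\mathcal{F}[x_1,\dots,x_n]$ with $Lf=g$. As $g\in W\subseteq W[x_1,\dots,x_n]$, the $W[x_1,\dots,x_n]$-component $f^W=\sum_{|a|\leq d}f^W_a x^a$ of $f$ satisfies $Lf^W=g$. Since $L(f^W_a x^a)=(Lf^W_a)\,x^a$ plus terms of $x$-degree strictly less than $|a|$, the part of $Lf^W$ of top $x$-degree $d$ equals $\sum_{|a|=d}(Lf^W_a)\,x^a$; if $d\geq1$ this vanishes (because $g$ has $x$-degree $0$), which forces $Lf^W_a=0$, hence $f^W_a=0$ by injectivity of $L$ on $W$ — contradicting the choice of $d$. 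Thus $d=0$, so $f^W=f^W_0\in W$ and $Lf^W_0=g$, contradicting the previous paragraph. Hence $L$ is not surjective on $\mathcal{F}[x_1,\dots,x_n]$, and the theorem follows.

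The one delicate step is the construction of $g$: its Fourier coefficients must decay faster than any polynomial rate, so that $g$ is genuinely a smooth function on the torus, while at the same time the corresponding coefficients of any would-be preimage under $D_1-\alpha D_2$ are pinned to modulus $1$. A Liouville-type approximation of an irrational number supplies precisely this tension, and it has no one-variable analogue, in agreement with the injectivity found in the one-dimensional case. The rest is formal: the Fourier-support splitting $\mathcal{F}=K\oplus W$ isolates a $\mathcal{D}$-stable part $W$ on which $L$ is injective and which contains $g$, so the polynomial variables — under which $L$ only lowers the $x$-degree — cannot produce the missing preimage.
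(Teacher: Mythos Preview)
Your proof is correct and follows essentially the same route as the paper: both exploit a Liouville number $\alpha$ and the first-order operator $D_1\pm\alpha D_2$, using the resulting small-divisor phenomenon to produce a smooth $g$ with no preimage, and both then argue that adjoining $x_1,\dots,x_n$ cannot rescue surjectivity. Your presentation is a bit more explicit---you build one concrete $g$ and use the $K\oplus W$ Fourier-support splitting to reduce the polynomial case to the pure $\mathcal{F}$ case, whereas the paper argues Fourier-mode by Fourier-mode that any preimage must have constant $p_a$ and then deduces a Diophantine lower bound contradicting the Liouville property---but the underlying mechanism is identical.
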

 \begin{proof} It suffices to show that  $\mathcal{C}^\infty(\t)[x_1,x_2]$ is not divisible. Towards this let $\ell$ be any Liouville number, and consider 
$L=D_1+\ell D_2$ in $\mathcal{D}$. Let $g = \sum _{a\in \mathbb{Z}^2} c_a e^{\imath<a,x>}$ be any element in $\mathcal{C}^\infty(\t)$, so that for every integer $k\geq 1$, there is a constant $C_k$ such that $|c_a|\leq C_k(1+|a_1|+|a_2|)^{-k}$ holds for all $a \in \mathbb{Z}^2$. If $\mathcal{C}^\infty(\t)[x_1,x_2]$ were divisible, then $L$ would define a surjective morphism on it, and so there would be an element $f= \sum _{a\in \mathbb{Z}^2} p_a(x)
e^{\imath<a,x>}$ in it such that $L(f)=g$. Thus
\[
\sum_{a\in \mathbb{Z}^2}(D_1p_a(x) + \ell D_2p_a(x) + (a_1+\ell a_2)p_a(x))e^{\imath <a,x>} = \sum_{a\in \mathbb{Z}^2}c_ae^{\imath <a,x>}
\]
which implies by Lemma 2.1 that $p_a(x)$ is a constant for all $a$ in $\mathbb{Z}^2$, and that $(a_1+\ell a_2)p_a = c_a$.

 As $\ell$ is Liouville, it is irrational, hence $a_1+\ell a_2 \neq 0$ for all $a=(a_1,a_2) \neq (0,0)$. It follows that the $p_a$ are equal to $\frac{c_a}{a_1+\ell a_2}$ for $a \neq 0$.

By assumption this solution belongs to $\mathcal{C}^\infty (T)[x_1,x_2]$  for every $g$ in $\mathcal{C}^\infty(\t)$ and thus for every choice 
of the $\{c_a\}$ that are rapidly decreasing. It would then follow that $|a_1+\ell a_2|\geq c(1+|a_1|+ |a_2|)^{-N}$ for some $N\geq 1$, 
some $c>0$ and all $(a_1 ,a_2)\in \mathbb{Z}^2$. This is a contradiction, for  since $\ell$ is a Liouville number, there cannot be such a bound.  \end{proof}
  
\section{Signal spaces for periodic 1D systems}

 In this section $\t=\mathbb{R}/2\pi \mathbb{Z}$ and $\mathcal{D}=\mathbb{C}[D]$ with
 $D =\frac{1}{\imath}\frac{d}{dx}$. We compute for various signal spaces $\mathcal{F}$ the Willems closure $\mathcal{M}^{\perp \perp}$ of a module $\mathcal{M}\subset \mathcal{D}^q$. Write 
$(\mathcal{D}^q/\mathcal{M})_{\tor}=\oplus \mathcal{D}/(D-a_i)^{n_i}$. By Lemma 1.1, 
$\mathcal{M}^{\perp \perp}/\mathcal{M}\subset (\mathcal{D}^q/\mathcal{M})_{\tor}$, and using Corollary 1.1 and  Lemma 2.1
it follows that:
\begin{enumerate}
\item For $\mathcal{F}=\mathcal{C}^\infty(\t)$, \[ \mathcal{M}^{\perp\perp}/\mathcal{M}=
(\oplus_{a_i\not \in \mathbb{Z} } \mathcal{D}/(D-a_i)^{n_i})\oplus 
( \oplus _{a_i\in \mathbb{Z} } (D-a_i)\mathcal{D}/(D-a_i)^{n_i})\]
\item For $\mathcal{F}=\mathcal{C}^\infty (\t)[x]$, or for the injective envelope of $\mathcal{C}^\infty (\t)$ in it,  
\[ \mathcal{M}^{\perp\perp}/\mathcal{M}= \oplus_{a_i\not \in \mathbb{Z} } \mathcal{D}/(D-a_i)^{n_i}\]
and
$\mathcal{M}^{\perp \perp}$ consists of the elements $r\in \mathcal{D}^q$ such that $Lr$ is in $\mathcal{M}$ for an $L\in \mathcal{D}$ without zeros in $\mathbb{Z}$.
\item For $\mathcal{F}=\mathcal{C}^\infty (\pt)$,  
\[ \mathcal{M}^{\perp\perp}/\mathcal{M} =
(\oplus_{a_i\not \in \mathbb{Q} } \mathcal{D}/(D-a_i)^{n_i})\oplus 
( \oplus _{a_i\in \mathbb{Q} } (D-a_i)\mathcal{D}/(D-a_i)^{n_i})\]
\item For $\mathcal{F}=\mathcal{C}^\infty (\pt)[x]$, or for the injective envelope of $\mathcal{C}^\infty (\pt)$
in it,
\[ \mathcal{M}^{\perp\perp}/\mathcal{M} =
\oplus_{a_i\not \in \mathbb{Q} } \mathcal{D}/(D-a_i)^{n_i}\]
and 
$\mathcal{M}^{\perp \perp}$ consists of the elements $r\in \mathcal{D}^q$ such that
$Lr$ is in $\mathcal{M}$ for an $L\in \mathcal{D}$ without zeros in $\mathbb{Q}$.
\end{enumerate}

Case (2) can be rephrased by
stating that $\mathcal{M}=\mathcal{M}^{\perp \perp }$ if and only if the support of the module 
$(\mathcal{D}^q/\mathcal{M})_{\tor}$ lies in $\mathbb{Z}\subset \mathbb{A}^1=\mathbb{C}$. 
The signal space $\mathcal{F}=\mathcal{C}^\infty(\t)[x]$ gives rise to a rather restricted set
of behaviors in $\mathcal{F}^q$. Indeed, the modules $\mathcal{M} = \mathcal{M}^{\perp \perp}$ corresponding
to behaviors in $\mathcal{F}^q$
are of the form $L\cdot \mathcal{W}\subset \mathcal{M}\subset \mathcal{W}$, where $\mathcal{W}$ is a direct summand of
$\mathcal{D}^q$ and $L\in \mathcal{D},\ L\neq 0$ has all its zeros in $\mathbb{Z}$.

Case (4) can be rephrased by stating that $\mathcal{M}=\mathcal{M}^{\perp \perp}$ if and only if the support of
$(\mathcal{D}^q/\mathcal{M})_{\tor}$ lies in $\mathbb{Q}\subset \mathbb{A}^1=\mathbb{C}$. This gives rise to a somewhat richer set of behaviors in $\mathcal{F}^q$.

\section{Signal spaces for periodic nD systems}

In this section $\t=\mathbb{R}^n/2\pi \mathbb{Z}^n$ and $n\geq 2$. For various choices of signal spaces we investigate the set of behaviors and the corresponding Willems closure. 

\subsection{$\mathcal{C}^\infty (\pt)[x_1,\dots ,x_n]$ and $\mathcal{C}^\infty(\pt)_{\fin}[x_1,\dots ,x_n]$}

According to Proposition 2.2 we may restrict ourselves in this subsection to the injective signal space $\mathcal{F}=\mathcal{C}^\infty (\pt)_{\fin}[x_1,\dots ,x_n]$.
We start with examples illustrating some of the features of the Willems closure. 

\begin{example}{\rm  $\mathcal{F}=\mathcal{C}^\infty (\pt)_{\fin}[x_1,x_2]$, $\frak{i}=(D_1^2,D_1D_2)\subset \mathcal{D} = \mathbb{C}[D_1, D_2]$.\\
For $L\in \mathcal{D}$ and $p_\lambda e^{\imath\lambda \cdot x} \in \mathcal{F}$ it follows that,
$L(p_\lambda e^{\imath\lambda \cdot x})=\{L(D_1+\lambda _1,D_2+\lambda _2)(p_\lambda )\} e^{\imath\lambda \cdot x}.$ Thus, to determine the behavior of the ideal $\frak{i}$ we have to consider the two equations 
 \[(D_1+\lambda _1)^2p_\lambda =0;\  (D_1+\lambda _1)(D_2+\lambda _2)p_\lambda =0, \mbox{  where }p_\lambda \in \mathbb{C}[x_1,x_2]\]
  If $\lambda _1\neq 0$, then the only solution is $p_\lambda =0$.\\
  If $\lambda _1=0,\ \lambda _2\in \mathbb{Q}, \lambda _2\neq 0$, then the solutions
  are $p_{(0,\lambda _2)}\in \mathbb{C}[x_2]$.\\
  If $\lambda _1=\lambda _2=0$, then the solutions are 
  $p_{(0,0)}=a_0+a_1x_1$ with $a_0\in \mathbb{C}[x_2]$ and $a_1\in \mathbb{C}$. \\
  
\noindent The behavior $\mathcal{B}=\frak{i}^\perp$ is then $\mathcal{B}_0 + 
  \mathcal{B}_1$ where 
 $\mathcal{B}_0:= \mathbb{C} + \mathbb{C}x_1$ and
 $\mathcal{B}_1:=\{ \sum _{\lambda _2\in \mathbb{Q}}  p_{(0,\lambda _2)}
 e^{\imath\lambda _2x_2}|\ \mbox{all } p_{(0,\lambda _2)}\in \mathbb{C}[x_2] \}$.\\
 
One easily sees that  $\mathcal{B}_0^\perp =(D_1^2,D_2)$, $(D_1^2,D_2)^\perp =\mathcal{B}_0$  and  $\mathcal{B}_1^\perp =(D_1)$, 
$(D_1)^\perp =\mathcal{B}_1$.  Thus $\mathcal{B}_0$ and $\mathcal{B}_1$ are
behaviors and correspond to the primary decomposition
$\frak{i}=(D_1^2,D_2)\cap (D_1)$ of the ideal $\frak{i}$. Note also that the behavior of the ideal $(D_1^2,D_2)+(D_1)=(D_1,D_2)$ is $\mathbb{C}$, which is the intersection of the behaviors $\mathcal{B}_0$ and $\mathcal{B}_1$. (The lattice structure of behaviors under the operations of sum and intersection is studied in more detail for the {\it classical spaces} in \cite{ss2}.) }
\hspace*{\fill}$\square$\end{example}
 
 \begin{example} {\rm $\mathcal{F}=\mathcal{C}^\infty (\pt)_{\fin}[x_1,x_2]$. Let $\frak{p}\subset \mathcal{D}$ denote the prime ideal generated by the operator
$L(D_1,D_2)=(D_1^2-D_2^2)+\pi (D_1D_2-1)$. 
The rational points of the variety $\mathcal{V}(\frak{p})\subset \mathbb{A}^2$ defined by the ideal 
$\frak{p}$ are $\{(1,1),(-1,-1)\}$. The behavior $\mathcal{B}:=\frak{p}^\perp \subset \mathcal{F}$ 
has the form $B_1\cdot e^{\imath(x_1+x_2)}\oplus B_{-1}\cdot e^{-\imath(x_1+x_2)}$, where
$B_1$ and $B_{-1}$ are the kernels of the operators 
$L_1:=L(D_1+1,D_2+1)=D_1^2-D_2^2+\pi D_1D_2+
(2+\pi )D_1+(-2+\pi )D_2$ and $L_2:=L(D_1-1,D_2-1)$ respectively,
acting on $\mathbb{C}[x_1,x_2]$. 

Let $\mathbb{C}[x_1,x_2]_{\leq n}$ denote the vector space of the polynomials of
total degree $\leq n$. Observe that the map 
$L_1:\mathbb{C}[x_1,x_2]_{\leq n}\rightarrow \mathbb{C}[x_1,x_2]_{\leq n-1}$
is surjective. It follows that $B_1\cap \mathbb{C}[x_1,x_2]_{\leq n}$ has dimension
$n+1$. Thus $B_1$ is an infinite dimensional subspace of $\mathbb{C}[x_1,x_2]$
and the same holds for $B_{-1}$. An explicit calculation showing $\mathcal{B}^\perp =
\frak{p}$ is possible. However, the statement $\frak{p}^{\perp \perp}=\frak{p}$ follows at once from Theorem 4.3.} \hfill $\square$ \end{example}

\begin{proposition} Let $\mathcal{F}=\mathcal{C}^\infty(\pt)_{\fin}[x_1,\dots ,x_n]$, and 
let $\mathcal{M}$ be a submodule of $\mathcal{D}^q$. Then the Willems closure $\mathcal{M}^{\perp \perp}$ of $\mathcal{M}$ with respect to $\mathcal{F}$ 
consists of the elements $x$ in $\mathcal{D}^q$ for which the ideal $\{r\in \mathcal{D}|\ rx \in \mathcal{M}\}$ is not contained
in any maximal ideal of the form $(D_1-b_1,\dots ,D_n-b_n)$ with
$(b_1,\dots ,b_n)\in \mathbb{Q}^n$. In other words $\mathcal{M}^{\perp \perp}$ is the largest submodule $\mathcal{M}^+$ of $\mathcal{D}^q$ containing $\mathcal{M}$, such that the support $S\subset \mathbb{A}^n$ of $\mathcal{M}^+/\mathcal{M}$ satisfies $S\cap \mathbb{Q}^n=\emptyset$.
\end{proposition}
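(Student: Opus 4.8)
The plan is to reduce the statement to the computation already carried out in Proposition 2.1 and to exploit the injectivity of $\mathcal{F}=\mathcal{C}^\infty(\pt)_{\fin}[x_1,\dots,x_n]$ together with its explicit structure as a direct summand of $\min$. First I would recall that $\mathcal{F}$ is injective (Proposition 2.1), so that every homomorphism from a submodule of a $\mathcal{D}$-module into $\mathcal{F}$ extends to the whole module; hence Corollary 1.1 applies and $\mathcal{M}^{\perp\perp}/\mathcal{M}$ consists of those $\xi\in(\mathcal{D}^q/\mathcal{M})_{\tor}$ killed by every $\ell\in\homo_\mathcal{D}((\mathcal{D}^q/\mathcal{M})_{\tor},\mathcal{F})$. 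Thus the problem is to determine, for a finitely generated torsion module $T=(\mathcal{D}^q/\mathcal{M})_{\tor}$, which elements $\xi\in T$ lie in the intersection of the kernels of all maps $T\to\mathcal{F}$.

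Next I would translate ``$\xi\mapsto0$ under every $\ell:T\to\mathcal{F}$'' into a support condition. For $\xi\in T$, let $\mathfrak{a}=\operatorname{ann}(\xi)=\{r\in\mathcal{D}\mid r\xi=0\}$; then $\mathcal{D}\xi\cong\mathcal{D}/\mathfrak{a}$, and since $\mathcal{F}$ is injective, a map $\mathcal{D}\xi\to\mathcal{F}$ extends to $T$, so $\xi\in\mathcal{M}^{\perp\perp}/\mathcal{M}$ if and only if $\homo_\mathcal{D}(\mathcal{D}/\mathfrak{a},\mathcal{F})=0$. Now I use the explicit description of $\mathcal{F}$: it is the submodule of $\min=\bigoplus_{\mathfrak{m}}E(\mathcal{D}/\mathfrak{m})$ (over all maximal ideals $\mathfrak{m}=(D_1-b_1,\dots,D_n-b_n)$, $b\in\mathbb{C}^n$) obtained by keeping only the summands with $b\in\mathbb{Q}^n$; indeed the projection $\pi$ of Proposition 2.1 realizes $\mathcal{F}$ as $\bigoplus_{b\in\mathbb{Q}^n}E(\mathcal{D}/\mathfrak{m}_b)$. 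Since $\homo_\mathcal{D}(\mathcal{D}/\mathfrak{a},E(\mathcal{D}/\mathfrak{m}_b))\neq0$ exactly when $\mathfrak{m}_b$ is an associated prime of $\mathcal{D}/\mathfrak{a}$ (equivalently, since $\mathcal{D}/\mathfrak{a}$ is finite length — $\xi$ is torsion and $\mathcal{D}$ is a polynomial ring of dimension $n$, so $\mathfrak{a}$ has height $\geq1$; one may further pass to the case where $\mathcal{D}/\mathfrak{a}$ has finite length after localizing appropriately, or note $\operatorname{Supp}(\mathcal{D}\xi)$ consists of the relevant points — when some point of $V(\mathfrak{a})$ is a rational point), we get: $\homo_\mathcal{D}(\mathcal{D}/\mathfrak{a},\mathcal{F})\neq0$ iff $V(\mathfrak{a})\cap\mathbb{Q}^n\neq\emptyset$. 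Hence $\xi\in\mathcal{M}^{\perp\perp}/\mathcal{M}$ iff $V(\operatorname{ann}\xi)\cap\mathbb{Q}^n=\emptyset$, which is precisely the condition in the statement, after lifting $\xi$ to $x\in\mathcal{D}^q$ and observing $\operatorname{ann}(\xi)=\{r\in\mathcal{D}\mid rx\in\mathcal{M}\}$.

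Finally I would package this as the ``largest submodule'' characterization. The set $\mathcal{M}^+$ of all $x\in\mathcal{D}^q$ with $\{r\mid rx\in\mathcal{M}\}$ not contained in any rational maximal ideal is readily checked to be a submodule containing $\mathcal{M}$ (if $x,y\in\mathcal{M}^+$ then $\operatorname{ann}(x+y)\supseteq\operatorname{ann}(x)\cap\operatorname{ann}(y)$, and a radical-of-intersection argument shows its vanishing locus meets $\mathbb{Q}^n$ only if one of $V(\operatorname{ann}x)$, $V(\operatorname{ann}y)$ does; scalar multiples are immediate). That $\operatorname{Supp}(\mathcal{M}^+/\mathcal{M})\cap\mathbb{Q}^n=\emptyset$ and that $\mathcal{M}^+$ is maximal with this property then follow since the support of $\mathcal{D}\bar x\subseteq\mathcal{D}^q/\mathcal{M}$ is $V(\operatorname{ann}x)$, and by the above $\mathcal{M}^{\perp\perp}=\mathcal{M}^+$.

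The main obstacle I anticipate is the homological input identifying $\homo_\mathcal{D}(\mathcal{D}/\mathfrak{a},E(\mathcal{D}/\mathfrak{m}_b))\neq0$ with $\mathfrak{m}_b\in\operatorname{Ass}(\mathcal{D}/\mathfrak{a})$, and handling torsion modules that are not of finite length over $\mathcal{D}$ — one must be careful that $\operatorname{Ass}(\mathcal{D}/\mathfrak{a})$ may contain non-maximal primes whose varieties nonetheless pass through rational points, so the correct equivalence is genuinely $V(\mathfrak{a})\cap\mathbb{Q}^n\neq\emptyset$ rather than a statement purely about associated primes; making this precise (via Matlis theory / injective envelopes over the Noetherian ring $\mathcal{D}$, as invoked already in Proposition 2.1 and Lemma 2.2) is the delicate point, while everything else is bookkeeping with the earlier lemmas and corollaries.
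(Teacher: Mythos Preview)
Your outline follows the same reduction as the paper: use injectivity of $\mathcal{F}$ to reduce the question ``is $\xi$ in $\mathcal{M}^{\perp\perp}/\mathcal{M}$?'' to ``is $\homo_\mathcal{D}(\mathcal{D}/\mathfrak{a},\mathcal{F})=0$?'', where $\mathfrak{a}=\operatorname{ann}(\xi)$. The difference is only in how this last $\homo$ is computed. The paper does it by hand: if $\mathfrak{a}\subset\mathfrak{m}_b$ with $b\in\mathbb{Q}^n$ then $1+\mathfrak{a}\mapsto e^{\imath\langle b,x\rangle}$ gives a nonzero map; conversely, if some $\ell$ is nonzero, pick a nonzero term $p_b(x)e^{\imath\langle b,x\rangle}$ in $\ell(1+\mathfrak{a})$ and apply $(D_1-b_1)^{m_1}\cdots(D_n-b_n)^{m_n}$ to strip the polynomial and land on $c\,e^{\imath\langle b,x\rangle}$, which is killed by $\mathfrak{a}$, forcing $\mathfrak{a}\subset\mathfrak{m}_b$. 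Your route via the Matlis decomposition $\mathcal{F}\cong\bigoplus_{b\in\mathbb{Q}^n}E(\mathcal{D}/\mathfrak{m}_b)$ reaches the same conclusion but is heavier machinery for what is a two-line direct check.

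There is, however, a genuine wobble in your middle paragraph. The assertion ``$\homo_\mathcal{D}(\mathcal{D}/\mathfrak{a},E(\mathcal{D}/\mathfrak{m}_b))\neq0$ exactly when $\mathfrak{m}_b\in\operatorname{Ass}(\mathcal{D}/\mathfrak{a})$'' is false as stated: for $\mathfrak{a}=(D_1)$ in $\mathbb{C}[D_1,D_2]$ the only associated prime is $(D_1)$, yet there are nonzero maps to $E(\mathcal{D}/\mathfrak{m}_b)$ for every $b=(0,b_2)$. The correct criterion is $\mathfrak{m}_b\in\operatorname{Supp}(\mathcal{D}/\mathfrak{a})=V(\mathfrak{a})$, which you eventually write down, but the parenthetical justification (``$\mathcal{D}/\mathfrak{a}$ is finite length since $\xi$ is torsion'') is wrong for $n>1$: torsion only gives $\operatorname{ht}(\mathfrak{a})\geq1$, not $n$. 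You flag this yourself as ``the delicate point'', and it is exactly where the paper's explicit Fourier-term argument pays off: it bypasses the associated-prime/support distinction entirely and never needs $\mathcal{D}/\mathfrak{a}$ to be artinian. If you want to keep the Matlis route, state and use the correct fact $\homo_\mathcal{D}(M,E(\mathcal{D}/\mathfrak{p}))\neq0\iff\mathfrak{p}\in\operatorname{Supp}(M)$ (for finitely generated $M$), and drop the finite-length claim.
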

\begin{proof} $\mathcal{M}^{\perp \perp }/\mathcal{M}$ consists of the elements $\xi$ such that
$\ell (\xi )=0$ for all $\ell \in \homo_\mathcal{D}(\mathcal{D}^q/\mathcal{M},\mathcal{F})$. Let 
$\frak{i}:=\{r\in \mathcal{D}|\ r\xi =0\}$. Since $\mathcal{F}$ is injective one has that 
$\xi \in \mathcal{M}^{\perp \perp}/\mathcal{M}$ if and only if $\homo_\mathcal{D}(\mathcal{D}/\frak{i},\mathcal{F})=0$. 
 
If $\frak{i}$ lies in a maximal ideal  $\frak{m}:=(D_1-b_1,\dots ,D_n-b_n)$ with $(b_1,\dots ,b_n)\in 
\mathbb{Q}^n$, then  $\homo_\mathcal{D}(\mathcal{D}/\frak{i},\mathcal{F})\neq 0$ because
${\homo}_\mathcal{D}(\mathcal{D}/\frak{m},\mathcal{F})\neq 0$.

On the other hand, suppose that  $\ell \in \homo_\mathcal{D}(\mathcal{D}/\frak{i},\mathcal{F})$ is non zero. Then
$\ell (1+\frak{i})=\sum _{a\in \mathbb{Q}^n}p_{a}(x)
e^{\imath<a,x>}$ has a non zero term 
$t:=p_b(x) e^{\imath<b,x>}$ and 
$rt=0$ for all $r\in \frak{i}$. If $b=(b_1,\dots ,b_n)$, then 
$(D_j-b_j)t=(D_j p_b(x))e^{\imath<b,x>}$. 
Thus for suitable integers $m_j\geq 0$,
$t_0:=(D_1-b_1)^{m_1}\cdots (D_n-b_n)^{m_n}t=ce^{\imath<b,x>}$ with $c\in \mathbb{C}^*$. Since $\frak{i}\cdot t_0=0$, it follows that $\frak{i}\subset (D_1-b_1,\dots ,D_n-b_n)$.
\end{proof}

A second formulation of the structure of $\mathcal{M}^{\perp \perp}$ uses the notion of {\it primary decomposition} of modules.  Let $\frak{p}\subset \mathcal{D}$ be a prime ideal. A submodule $\mathcal{M}$ of $\mathcal{D}^q$ is called $\frak{p}$-primary (with respect to $\mathcal{D}^q$) if the set $\ass(\mathcal{D}^q/\mathcal{M})$ of associated primes of $\mathcal{D}^q/\mathcal{M}$ equals $\{\frak{p}\}$. For a general submodule $\mathcal{M}\subset \mathcal{D}^q$, there exists an irredundant (one where 
no term can be  omitted) primary decomposition  $\mathcal{M}=\mathcal{M}_1\cap \dots \cap \mathcal{M}_t$ where $\mathcal{M}_i$ is $\frak{p}_i$-primary and $\{\frak{p}_1,\dots ,\frak{p}_t\}=  \ass(\mathcal{D}^q/\mathcal{M})$. For more details we refer to \cite{ma}. We note that the following theorem
is an analogue of the Nullstellensatz of \cite{ss1}. See also \cite{ps,ss2,sa} on this topic.
 
\begin{theorem} [Nullstellensatz]
Let the submodule $\mathcal{M}\subset \mathcal{D}^q$ have an irredundant primary decomposition
$\mathcal{M}=\mathcal{M}_1\cap \dots \cap \mathcal{M}_t$ where $\mathcal{M}_i$ is $\frak{p}_i$-primary. Let $\mathcal{V}(\frak{p}_i)$, the variety defined by $\frak{p}_i$,
contain a rational point for $i=1,\dots ,r$ and not for $i=r+1,\dots ,t$. Then the Willems closure $\mathcal{M}^{\perp \perp}$ with respect to 
$\mathcal{F}=\mathcal{C}^\infty(\pt)_{\fin}[x_1,\dots ,x_n]$ is equal to $\mathcal{M}_1\cap \cdots \cap \mathcal{M}_r$~. Thus $\mathcal{M}$ equals $\mathcal{M}^{\perp \perp}$ if and only if every $\mathcal{V}(\frak{p}_i)$ contains rational points.
  \end{theorem}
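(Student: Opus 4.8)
The plan is to deduce the Nullstellensatz from Proposition 4.2, which already characterizes $\mathcal{M}^{\perp\perp}$ as the largest submodule $\mathcal{M}^+ \supseteq \mathcal{M}$ of $\mathcal{D}^q$ whose quotient $\mathcal{M}^+/\mathcal{M}$ has support disjoint from $\mathbb{Q}^n$. So the whole argument reduces to translating that support condition into the language of primary decomposition, using the standard commutative-algebra facts about $\ass$ and support recorded just before the theorem (reference \cite{ma}). First I would recall that for a submodule $\mathcal{N} \subseteq \mathcal{D}^q$ with irredundant primary decomposition $\mathcal{N} = \mathcal{N}_1 \cap \cdots \cap \mathcal{N}_s$, the support of $\mathcal{D}^q/\mathcal{N}$ is $\bigcup_i \mathcal{V}(\frak{p}_i)$ where $\frak{p}_i = \sqrt{\operatorname{Ann}(\mathcal{D}^q/\mathcal{N}_i)}$, and more to the point $\ass(\mathcal{D}^q/\mathcal{N}) = \{\frak{p}_1,\dots,\frak{p}_s\}$.

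Next I would set $\mathcal{M}' := \mathcal{M}_1 \cap \cdots \cap \mathcal{M}_r$ (the intersection of the primary components whose varieties carry a rational point) and argue the two inclusions. For $\mathcal{M}^{\perp\perp} \subseteq \mathcal{M}'$: by Proposition 4.2 it suffices to show that the support $S$ of $\mathcal{M}'/\mathcal{M}$ misses $\mathbb{Q}^n$. From the exact sequence $0 \to \mathcal{M}'/\mathcal{M} \to \mathcal{D}^q/\mathcal{M} \to \mathcal{D}^q/\mathcal{M}' \to 0$ one gets $\operatorname{Supp}(\mathcal{M}'/\mathcal{M}) \subseteq \operatorname{Supp}(\mathcal{D}^q/\mathcal{M}) = \bigcup_{i=1}^t \mathcal{V}(\frak{p}_i)$; I would then observe that the map $\mathcal{M}'/\mathcal{M} = \mathcal{M}'/(\mathcal{M}_1\cap\cdots\cap\mathcal{M}_t) \hookrightarrow \bigoplus_{i=r+1}^t \mathcal{M}'/\mathcal{M}_i$ (since $\mathcal{M}' = \mathcal{M}_1 \cap \cdots \cap \mathcal{M}_r$ already kills the first $r$ factors) embeds $\mathcal{M}'/\mathcal{M}$ into a module supported on $\mathcal{V}(\frak{p}_{r+1}) \cup \cdots \cup \mathcal{V}(\frak{p}_t)$, and by hypothesis none of these varieties contains a rational point, so $S \cap \mathbb{Q}^n = \emptyset$. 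Hence $\mathcal{M}' \subseteq \mathcal{M}^{\perp\perp}$, wait — this gives $\mathcal{M}' \subseteq \mathcal{M}^+$ for the maximal such $\mathcal{M}^+$, i.e. $\mathcal{M}' \subseteq \mathcal{M}^{\perp\perp}$; I will be careful to get the direction of this inclusion right.

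For the reverse inclusion $\mathcal{M}^{\perp\perp} \subseteq \mathcal{M}'$: again using Proposition 4.2, the quotient $\mathcal{M}^{\perp\perp}/\mathcal{M}$ has support disjoint from $\mathbb{Q}^n$, so I must show that adding anything outside $\mathcal{M}'$ would force a rational point into the support. Concretely, suppose $\xi \in \mathcal{M}^{\perp\perp}$ but $\xi \notin \mathcal{M}_i$ for some $i \le r$. Then $\bar\xi \ne 0$ in $\mathcal{D}^q/\mathcal{M}_i$, which is $\frak{p}_i$-primary, so $\frak{p}_i \in \operatorname{Supp}(\mathcal{D}\bar\xi) \subseteq \operatorname{Supp}(\mathcal{M}^{\perp\perp}/\mathcal{M}_i)$; pushing this down to $\mathcal{M}^{\perp\perp}/\mathcal{M}$ via the surjection $\mathcal{M}^{\perp\perp}/\mathcal{M} \twoheadrightarrow \mathcal{M}^{\perp\perp}/((\mathcal{M}^{\perp\perp}) \cap \mathcal{M}_i)$ and localizing, I would conclude that $\mathcal{V}(\frak{p}_i) \subseteq \operatorname{Supp}(\mathcal{M}^{\perp\perp}/\mathcal{M})$, or at least that $\operatorname{Supp}(\mathcal{M}^{\perp\perp}/\mathcal{M})$ meets every component $\mathcal{V}(\frak{p}_i)$ on which $\mathcal{M}^{\perp\perp}$ properly contains $\mathcal{M}_i$; since $\mathcal{V}(\frak{p}_i)$ for $i \le r$ contains a rational point, this contradicts the disjointness from $\mathbb{Q}^n$. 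The final "if and only if" sentence then follows immediately: $\mathcal{M} = \mathcal{M}^{\perp\perp}$ iff $\mathcal{M}_1 \cap \cdots \cap \mathcal{M}_r = \mathcal{M}_1 \cap \cdots \cap \mathcal{M}_t$, which by irredundancy of the decomposition holds iff $r = t$, i.e. iff every $\mathcal{V}(\frak{p}_i)$ has a rational point.

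**Main obstacle.** The delicate step is the reverse inclusion: I must rule out a submodule strictly between $\mathcal{M}_i$ and $\mathcal{D}^q$ whose quotient by $\mathcal{M}_i$ is supported away from $\frak{p}_i$ — impossible since $\mathcal{D}^q/\mathcal{M}_i$ is $\frak{p}_i$-coprimary and every nonzero submodule of a coprimary module again has $\frak{p}_i$ as its unique associated prime, hence $\frak{p}_i$ in its support. Turning this local statement about $\mathcal{M}_i$ into the global assertion that $\frak{p}_i$ (equivalently, a rational point of $\mathcal{V}(\frak{p}_i)$) lies in $\operatorname{Supp}(\mathcal{M}^{\perp\perp}/\mathcal{M})$ is where I would have to be most careful with the bookkeeping of intersections and quotients; everything else is a routine chase through short exact sequences and the support calculus from \cite{ma}.
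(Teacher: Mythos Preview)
Your argument is correct and takes a genuinely different route from the paper's own proof. You reduce everything to Proposition~4.1 (the support characterization of $\mathcal{M}^{\perp\perp}$) and then finish with pure commutative algebra: for the inclusion $\mathcal{M}'\subseteq\mathcal{M}^{\perp\perp}$ you embed $\mathcal{M}'/\mathcal{M}$ into $\bigoplus_{i>r}\mathcal{D}^q/\mathcal{M}_i$ (note: your notation ``$\mathcal{M}'/\mathcal{M}_i$'' should read $\mathcal{D}^q/\mathcal{M}_i$, or the image of $\mathcal{M}'$ therein), and for the reverse inclusion you use that a nonzero element of the $\frak{p}_i$-coprimary module $\mathcal{D}^q/\mathcal{M}_i$ has annihilator with radical $\frak{p}_i$, forcing $\mathcal{V}(\frak{p}_i)$ into the support of $\mathcal{M}^{\perp\perp}/\mathcal{M}$. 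The paper, by contrast, does not invoke Proposition~4.1 at all: it first shows $\mathcal{M}^\perp=\mathcal{M}_0^\perp$ directly via Fourier transforms of tempered distributions (supports of $\widehat{m(D)f}$ land in $\bigcup_{i>r}\mathcal{V}(\frak{p}_i)\cap\mathbb{Q}^n=\emptyset$), and then proves maximality of $\mathcal{M}_0$ by applying $\homo_\mathcal{D}(-,\mathcal{F})$ to a short exact sequence and using injectivity of $\mathcal{F}$ to exhibit a nonzero element $e^{\imath\langle a,x\rangle}$ in the last term. Your approach is shorter and more algebraic, at the cost of leaning on the earlier proposition; the paper's is more self-contained and closer to the analytic structure of $\mathcal{F}$. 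One cosmetic point: your reverse-inclusion paragraph is more tangled than it needs to be---the one-line observation $(\mathcal{M}:\xi)\subseteq(\mathcal{M}_i:\xi)\subseteq\frak{p}_i$, hence $\mathcal{V}(\frak{p}_i)\subseteq\operatorname{Supp}(\mathcal{D}\bar\xi)\subseteq\operatorname{Supp}(\mathcal{M}^{\perp\perp}/\mathcal{M})$, replaces the talk of surjections and localization.
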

  
  \begin{proof}
  
  It is easy to see that $\mathcal{M}_0:=\mathcal{M}_1\cap \cdots \cap \mathcal{M}_r$ is independent of the primary decomposition (see \cite{ss1}).
We first claim that the behavior $\mathcal{M}_0^\perp$ of $\mathcal{M}_0$ in $\mathcal{F}$ equals the behavior $\mathcal{M}^\perp$ of $\mathcal{M}$. As $\mathcal{M} \subset \mathcal{M}_0$ it suffices to show that $\mathcal{M}^\perp \subset \mathcal{M}_0^\perp$. Suppose it is not. Then there is an $f$ in $\mathcal{M}^\perp$ and some $m$ in $\mathcal{M}_0 \setminus \mathcal{M}$ such that $m(D)f \neq 0$. However for every $r$ in the ideal $(\mathcal{M}:m), ~r(D)(m(D)f) = 0$. Taking Fourier transforms - every element of $\mathcal{F}$ is a temperate distribution - gives $r(x)\widehat{(m(D)f)}(x)=0$, hence the support of $\widehat{m(D)f}$ is contained in $\mathcal{V}(r)\cap\mathbb{R}^n$ for every $r$ in $(\mathcal{M}:m)$. Now $\widehat{(m(D)f)}(x)=m(x)\hat{f}(x)$, and if $f=\sum_{a\in\mathbb{Q}^n}p_a(x)e^{\imath<a,x>}$, then $\hat{f}(x)=\sum_{a\in \mathbb{Q}^n} p_a(D)\delta_a$ - where $\delta_a$ is the Dirac distribution supported at $a$ - so that the support of $\widehat{m(D)f}$ is contained in $\mathbb{Q}^n$ and hence in $\mathcal{V}((\mathcal{M}:m))\cap \mathbb{Q}^n$. 

On the other hand the ideal $(\mathcal{M}:m)$ equals $\cap_{i=1}^t(\mathcal{M}:m)$, and as $m$ is in $\mathcal{M}_0\setminus \mathcal{M}$ it follows that the radical ideal $\sqrt{(\mathcal{M}:m)}$ is equal to the intersection of a subset of $\frak{p}_{r+1},\dots,\frak{p}_t$. Thus $\mathcal{V}((\mathcal{M}:m))$ is contained in $\cup_{i=r+1}^t\mathcal{V}(\frak{p}_i)$ whose intersection with $\mathbb{Q}^n$, by assumption, is empty. Thus $m(D)f=0$, which is a contradiction to the choice of $f$ and $m$ above.   

We now show that $\mathcal{M}_0$ is the largest submodule of $\mathcal{D}^q$ with the same behavior as that of $\mathcal{M}$. So let $m$ be any element of $\mathcal{D}^q\setminus \mathcal{M}_0$, and consider the exact sequence
\[ 0 \rightarrow \mathcal{D}/(\mathcal{M}_0:m) \stackrel{m}{\longrightarrow} \mathcal{D}^q/\mathcal{M}_0 \stackrel{\pi}{\longrightarrow} \mathcal{D}^q/\mathcal{M}_0+(m) \rightarrow 0\]  
where the morphism $m$ maps the class of of $r$ to the class of $mr$, and $\pi$ is as usual. Applying the functor $\homo_\mathcal{D}(\cdot ~ ,\mathcal{F})$ gives the exact sequence
\[0\rightarrow \homo_\mathcal{D}(\mathcal{D}^q/\mathcal{M}_0+(m),\mathcal{F}) \longrightarrow \homo_\mathcal{D}(\mathcal{D}^q/\mathcal{M}_0,\mathcal{F}) \stackrel{m(D)}{\longrightarrow} \homo_\mathcal{D}(\mathcal{D}/(\mathcal{M}_0:m),\mathcal{F}) \rightarrow 0 \]
Observe now that $\mathcal{V}((\mathcal{M}_0:m))$ is the union of some of the varieties $\mathcal{V}(\frak{p}_1), \dots \mathcal{V}(\frak{p}_r)$, hence by assumption there is a rational point, say $a$ on it. Therefore the function $e^{\imath{<a,x>}}$ is in the last term $\homo_\mathcal{D}(\mathcal{D}/(\mathcal{M}_0:m),\mathcal{F})$ above and which is therefore nonzero. This implies that the behavior $(\mathcal{M}_0+m)^\perp$ is strictly smaller than the behavior $\mathcal{M}^\perp$.
 \end{proof}
 
A central notion of the subject is that of a {\it controllable} behavior \cite{w,ps}. A behavior which admits an {\it image representation} is controllable and the next result characterizes such behaviors.

\begin{theorem} Let $\mathcal{F}=\mathcal{C}^\infty(\pt)_{\fin}[x_1,\dots ,x_n]$. Then the behavior $\mathcal{M}^\perp$ in $\mathcal{F}^q$ of a submodule $\mathcal{M} \subset \mathcal{D}^q$ is the image of some morphism $L(D):\mathcal{F}^p \rightarrow \mathcal{F}^q$ if and only if the varieties of the nonzero associated primes of $\mathcal{D}^q/\mathcal{M}$ do not contain rational points.
\end{theorem}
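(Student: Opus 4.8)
\emph{Proof plan.} The argument has two ingredients: a reduction, valid for any injective signal space, to a module-theoretic condition, followed by an application of the Nullstellensatz together with standard facts about associated primes.

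\emph{Step 1: reduction to torsion-freeness of $\mathcal{D}^q/\mathcal{M}^{\perp\perp}$.} I would first prove that for the injective $\mathcal{D}$-module $\mathcal{F}$ (Proposition~2.1), a behavior $\mathcal{M}^\perp\subset\mathcal{F}^q$ is the image of some morphism $L(D):\mathcal{F}^p\to\mathcal{F}^q$ if and only if $\mathcal{D}^q/\mathcal{M}^{\perp\perp}$ is torsion free. (This is the familiar criterion for the existence of an image representation; the passage from $\mathcal{M}$ to $\mathcal{M}^{\perp\perp}$ is needed here because $\mathcal{F}$ is injective but not a cogenerator, so the behavior only sees the Willems closure.) For one direction, a matrix $L\in\mathcal{D}^{q\times p}$ determines, by right multiplication, a $\mathcal{D}$-linear map $\mathcal{D}^{1\times q}\to\mathcal{D}^{1\times p}$; letting $\mathcal{K}$ be its kernel and $\mathcal{I}=\mathcal{D}^{1\times q}L$ its image, applying the contravariant exact functor $\homo_\mathcal{D}(-,\mathcal{F})$ — exact because $\mathcal{F}$ is injective — to the two short exact sequences $0\to\mathcal{K}\to\mathcal{D}^{1\times q}\to\mathcal{I}\to0$ and $0\to\mathcal{I}\to\mathcal{D}^{1\times p}\to\mathcal{D}^{1\times p}/\mathcal{I}\to0$ identifies the image of $L(D)$ in $\mathcal{F}^q$ with $\homo_\mathcal{D}(\mathcal{D}^{1\times q}/\mathcal{K},\mathcal{F})=\mathcal{K}^\perp$. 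Since $\mathcal{D}^{1\times q}/\mathcal{K}\cong\mathcal{I}$ is a submodule of a free module, it is torsion free, so Lemma~1.1 gives $\mathcal{K}^{\perp\perp}=\mathcal{K}$; hence an image representation $\mathcal{M}^\perp=\mathcal{K}^\perp$ forces $\mathcal{M}^{\perp\perp}=\mathcal{K}^{\perp\perp}=\mathcal{K}$, so $\mathcal{D}^q/\mathcal{M}^{\perp\perp}$ is torsion free. Conversely, if $\mathcal{D}^q/\mathcal{M}^{\perp\perp}$ is torsion free it embeds into a free module $\mathcal{D}^{1\times p}$; composing the projection $\mathcal{D}^{1\times q}\to\mathcal{D}^{1\times q}/\mathcal{M}^{\perp\perp}$ with this embedding gives right multiplication by some $L$, with kernel $\mathcal{M}^{\perp\perp}$, and the computation above shows that the image of $L(D)$ equals $(\mathcal{M}^{\perp\perp})^\perp=\mathcal{M}^\perp$. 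Thus the theorem becomes equivalent to: $\mathcal{D}^q/\mathcal{M}^{\perp\perp}$ is torsion free if and only if no nonzero associated prime of $\mathcal{D}^q/\mathcal{M}$ has a variety containing a rational point.

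\emph{Step 2: reading off the associated primes via the Nullstellensatz.} Choose an irredundant primary decomposition $\mathcal{M}=\mathcal{M}_1\cap\cdots\cap\mathcal{M}_t$ with $\mathcal{M}_i$ being $\frak{p}_i$-primary, numbered so that $\mathcal{V}(\frak{p}_i)$ contains a rational point exactly for $i\le r$. By the Nullstellensatz (Theorem~4.3), $\mathcal{M}^{\perp\perp}=\mathcal{M}_1\cap\cdots\cap\mathcal{M}_r$, read as $\mathcal{D}^q$ when $r=0$. This truncated decomposition is again irredundant: the primes $\frak{p}_1,\dots,\frak{p}_r$ are distinct, and if some component $\mathcal{M}_j$ with $j\le r$ could be dropped, then intersecting the resulting identity with $\mathcal{M}_{r+1}\cap\cdots\cap\mathcal{M}_t$ would show $\mathcal{M}_j$ could be dropped from the full decomposition as well, a contradiction. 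Hence $\ass(\mathcal{D}^q/\mathcal{M}^{\perp\perp})=\{\frak{p}_1,\dots,\frak{p}_r\}$. Now a finitely generated module over the domain $\mathcal{D}$ is torsion free precisely when it has no nonzero associated prime, so $\mathcal{D}^q/\mathcal{M}^{\perp\perp}$ is torsion free iff every $\frak{p}_i$ with $i\le r$ equals $(0)$. Since $\mathcal{V}((0))=\mathbb{A}^n$ always contains rational points, a nonzero associated prime $\frak{p}$ of $\mathcal{D}^q/\mathcal{M}$ satisfies $\mathcal{V}(\frak{p})\cap\mathbb{Q}^n\neq\emptyset$ if and only if it occurs among $\frak{p}_1,\dots,\frak{p}_r$; so the condition of Step~1 is exactly that the varieties of the nonzero associated primes of $\mathcal{D}^q/\mathcal{M}$ contain no rational points, which is the assertion of the theorem.

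\emph{Main obstacle.} The substantive part is Step~1: the homological identification of the image of $L(D)$ with the behavior $\mathcal{K}^\perp$ whose defining module $\mathcal{D}^q/\mathcal{K}$ is torsion free, and the reverse construction of $L$ from an embedding of $\mathcal{D}^q/\mathcal{M}^{\perp\perp}$ into a free module — the latter using that $\mathcal{D}^q/\mathcal{M}^{\perp\perp}$, being finitely generated and torsion free over the polynomial ring $\mathcal{D}$, indeed embeds in a free module, and the routine care of permitting $p=0$ or $L=0$ so that the zero behavior is covered. Once this is in place, Step~2 is a direct reading of Theorem~4.3 and elementary commutative algebra; as elsewhere in this paper, all of the arithmetic content is concentrated in the Nullstellensatz.
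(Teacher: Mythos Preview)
Your proof is correct and follows the same overall strategy as the paper: reduce the existence of an image representation to a module-theoretic criterion, then invoke the Nullstellensatz. The packaging of the first step differs slightly. The paper fixes a presentation matrix $M(D)$ for $\mathcal{M}$, takes $L$ to be the syzygy matrix on the columns of $M$, and shows that the image of this particular $L(D)$ is always $\mathcal{M}_1^\perp$, where $\mathcal{M}_1/\mathcal{M}=(\mathcal{D}^q/\mathcal{M})_{\tor}$; hence $\mathcal{M}^\perp$ admits an image representation iff $\mathcal{M}^{\perp\perp}=\mathcal{M}_1$. You argue abstractly instead: any image is of the form $\mathcal{K}^\perp$ with $\mathcal{D}^q/\mathcal{K}$ torsion free (and conversely, via an embedding of a torsion-free quotient into a free module), so the criterion becomes that $\mathcal{D}^q/\mathcal{M}^{\perp\perp}$ be torsion free. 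These two criteria are equivalent, since $\mathcal{M}\subset\mathcal{M}^{\perp\perp}\subset\mathcal{M}_1$ always holds and $\mathcal{M}_1/\mathcal{M}^{\perp\perp}$ is simultaneously a torsion module (as a quotient of $\mathcal{M}_1/\mathcal{M}$) and, under your hypothesis, torsion free. The paper's route has the small bonus of exhibiting a canonical $L$; yours is cleaner and makes the role of Lemma~1.1 more transparent. The second step---reading off the associated primes of $\mathcal{D}^q/\mathcal{M}^{\perp\perp}$ from the truncated primary decomposition furnished by the Nullstellensatz---is handled identically in both arguments.
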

\begin{proof}
Let $M(D)$ be an $r\times q$ matrix whose $r$ {\it rows} generate $\mathcal{M}$ (so that $\mathcal{M}^\perp$ equals the kernel of the morphism $M(D):\mathcal{F}^q \rightarrow \mathcal{F}^r$). Let $\mathcal{L}$ be the submodule of $\mathcal{D}^q$ consisting of all relations between the $q$ columns of $M(D)$. Suppose that $\mathcal{L}$ is generated by some $p$ elements $\ell_1,\dots ,\ell_p$. Let $L(D)$ be the matrix whose {\it columns} are $\ell_1,\dots ,\ell_p$ and which therefore defines a morphism $L(D):\mathcal{F}^p \rightarrow \mathcal{F}^q$. As $\mathcal{F}$ is an injective module, its image equals the kernel of a morphism $M_1(D):\mathcal{F}^q \rightarrow \mathcal{F}^{r_1}$, where that $r_1$ rows of $M_1(D)$ generate all relations between the rows of $L(D)$. Let $\mathcal{M}_1$ be the submodule of $\mathcal{D}^q$ generated by the rows of $M_1(D)$; then
$\mathcal{M}_1/\mathcal{M}=(\mathcal{D}^q/\mathcal{M})_\tor$ so that $\mathcal{D}^q/\mathcal{M}_1$ is torsion free. Thus it follows that $\mathcal{M}^\perp$ is an image, in fact the image of $L(D):\mathcal{F}^p \rightarrow \mathcal{F}^q$, if and only if $\mathcal{M}^\perp=\mathcal{M}_1^\perp$, i.e if and only if the Willems closure of $\mathcal{M}$ equals $\mathcal{M}_1$. By the previous theorem this is so if and only if the variety of every nonzero associated prime of $\mathcal{D}^q/\mathcal{M}$ does not contain rational points.\end{proof}

\subsection{$\mathcal{C}^\infty (\t)[x_1,\dots ,x_n]$ and $\mathcal{C}^\infty(\t)_{\fin}[x_1,\dots ,x_n]$}
In this case it suffices to consider the signal space $\mathcal{C}^\infty(\t)_{\fin}[x_1,\dots ,x_n]$.
The results of \S 4.1, as well as the examples, carry over if everywhere one replaces
$\mathbb{Q}$ by $\mathbb{Z}$ and `rational point' by `integral point'.
 
\subsection{$\mathcal{C}^\infty (\pt)$ and $\mathcal{C}^\infty(\pt)_{\fin}$}

We consider the signal space $\mathcal{F}=\mathcal{C}^\infty(\pt)_{\fin}$.\\ 
\noindent {\it Description of $\frak{i}^{\perp \perp}$ for ideals $\frak{i}\subset \mathcal{D}$ and behaviors in $\mathcal{F}$}: 
Recall that the support of a series $f(x)=\sum _{a\in \mathbb{Q}^n}c_a
e^{\imath<a,x>}$ is the set $\{a|\ c_a\neq 0\}$.
For $a=(a_1,\dots ,a_n)\in \mathbb{C}^n$ we write 
$(D-a)$ for the maximal ideal $(D_1-a_1,\dots ,D_n-a_n)$. Given an ideal $\frak{i}\subset \mathcal{D}$, let $\mathcal{V}(\frak{i})$ be its variety in $\mathbb{C}^n$, and let $\mathcal{S}(\frak{i})=\mathcal{V}(\frak{i})(\mathbb{Q})$ (i.e., $\mathcal{V}(\frak{i})\cap \mathbb{Q}^n$ seen as a subset of $\mathbb{C}^n$).
 
 If $f(x)=\sum _{a\in \mathbb{Q}^n}c_a
e^{\imath<a,x>}\in \frak{i}^\perp$, then each $c_a
e^{\imath<a,x>}\in \frak{i}^\perp$. Thus $f\in \frak{i}^\perp$ if and only if the support
of $f$ lies in $\mathcal{S}(\frak{i})$. Further, $\frak{i}^{\perp \perp}$ consists of all the polynomials in $\mathcal{D}$
which are zero on the set $\mathcal{S}(\frak{i})$. In other words 
$\frak{i}^{\perp \perp}=\bigcap _{a\in \mathcal{S}(\frak{i})} (D-a)$.
Equivalently, $\frak{i}^{\perp \perp}$ is the reduced ideal of the Zariski closure of $\mathcal{S}(\frak{i})$.
 The behaviors $\mathcal{B}\subset \mathcal{F}$ are in this way in 1-1 correspondence with Zariski closed subsets $S$ of $\mathbb{C}^n$ satisfying
$S\cap \mathbb{Q}^n$ is Zariski dense in $S$.  

\bigskip 

\noindent {\it Description of $\mathcal{M}^{\perp \perp}$ for submodules $\mathcal{M}$ of $\mathcal{D}^q$}:
The elements of $\mathcal{F}^q$ are written in the  
form $f(x)=\sum _{a\in \mathbb{Q}^n}c_ae^{\imath<a,x>}$, with $c_a=(c_{a_1},\dots ,c_{a_q}))\in \mathbb{C}^q$. Now 
$m=(m_1,\dots ,m_q)\in \mathcal{D}^q$ applied to $f$ has the form
$\sum _{a\in \mathbb{Q}^n} <m(a),c_a>
e^{\imath<a,x>}$, with 
$ <m(a),c_a>=\sum _{j=1}^q m_j(a)c_{a_j}$. (Here, for any 
$m=(m_1,\dots ,m_q)\in \mathcal{D}^q$ we write 
$m(a)=(m_1(a),\dots ,m_q(a))\in \mathbb{C}^q$, where as before $m_i(a) = m_i(a_1,\dots, a_n)$.

  For a fixed $a\in \mathbb{Q}^n$, the set 
  $V(a):=\{m(a)\in \mathbb{C}^q|\ m\in \mathcal{M}\}$ is a 
linear subspace of $\mathbb{C}^q$. We conclude that $\mathcal{M}^\perp$ consists of the
elements $f(x)=\sum _{a\in \mathbb{Q}^n}c_ae^{\imath<a,x>}$ such that $<V(a),c_a>=0$. It now follows that
$\mathcal{M}^{\perp \perp}$ consists of the elements $r\in \mathcal{D}^q$ such that for each
$a\in \mathbb{Q}^n$, $r(a)\in V(a)$.\\

\begin{example}{\rm
$n=2,\ q=2$ and $\mathcal{M}\subset \mathcal{D}^2$ is generated by $(D_1^2,D_1D_2)$. Then $V(a)=\mathbb{C}(a_1^2,a_1a_2)$
for all $a\in \mathbb{Q}^2$. One finds that 
$\mathcal{M}^\perp\subset \mathcal{F}^2$ consists of the expressions
$\sum _{a\in \mathbb{Q}^2}(c_{a_1},c_{a_2})e^{\imath<a,x>}$ satisfying $a_1^2c_{a_1}+a_1a_2c_{a_2}=0$. Further $\mathcal{M}^{\perp \perp }=\mathcal{M}$.} \hfill $\square$ \end{example}

\noindent {\it An `algorithm' computing  $\mathcal{M}^{\perp \perp}$ for a submodule $\mathcal{M}$ of $\mathcal{D}^q$}:
For every $b\in \mathbb{Q}^n$ one considers  the
homomorphism \[m_b: \mathcal{F}=C^\infty(\pt)_{\fin}\rightarrow
\mathbb{C}e^{\imath<b,x>}\cong \mathcal{D}/(D-b)
\]
given by $m_b:\sum _ac_ae^{\imath<a,x>} 
\mapsto c_be^{\imath<b,x>}$ (where as before $(D-b)=(D_1-b_1,\dots,D_n-b_n)$). It follows at once that
$\xi \in \mathcal{D}^q/\mathcal{M}$ belongs to $\mathcal{M}^{\perp \perp}/\mathcal{M}$ if and only if $\ell (\xi)=0$
for every homomorphism $\ell :\mathcal{D}^q/\mathcal{M}\rightarrow \mathcal{D}/(D-b)$
with $b\in \mathbb{Q}^n$. As in the proof of Theorem 4.1, we consider an
irredundant primary decomposition $\cap \mathcal{M}_i$  of $\mathcal{M}$ and try to compute the $\mathcal{M}_i^{\perp \perp}$.\\

Let $\mathcal{M}$ be $\frak{p}$-primary for its embedding in $\mathcal{D}^q$, then $\mathcal{M}^{\perp \perp}\supset \mathcal{M}+\frak{p}\mathcal{D}^q$ and we may
replace $\mathcal{M}$ by the $\frak{p}$-primary module $\mathcal{M}_1:=\mathcal{M}+\frak{p}\mathcal{D}^q$ since $\mathcal{M}^{\perp \perp }=\mathcal{M}_1^{\perp \perp}$.
 We observe that  $\mathcal{D}^q/\mathcal{M}_1$ has, as a module over $\mathcal{D}/\frak{p}$, no torsion and therefore is a submodule of $(\mathcal{D}/\frak{p})^r$ for some $r\geq 1$. Now $\mathcal{M}_1^{\perp \perp}/\mathcal{M}_1= \cap (\ker (\mathcal{D}^q/\mathcal{M}_1\stackrel{\ell}{\rightarrow}\mathcal{D}/(D-b))$, where the intersection is taken over all $b\in \mathcal{V}(\frak{p})\cap \mathbb{Q}^n$ and all homomorphisms $\ell$.

\bigskip

Suppose that  the set $\mathcal{V}(\frak{p})\cap \mathbb{Q}^n$ is Zariski dense in $V(\frak{p})$ (this holds in particular for $\frak{p}=(0)$).
Then $\cap (\ker ((\mathcal{D}/\frak{p})^r\stackrel{\ell}{\rightarrow} \mathcal{D}/(D-b))$,    
$b\in \mathcal{V}(\frak{p})\cap \mathbb{Q}^n$ and all $\ell$, equals $(0)$. It follows that $\mathcal{M}_1^{\perp \perp }=\mathcal{M}_1$.\\

Suppose that  the set $\mathcal{V}(\frak{p})\cap \mathbb{Q}^n$ is empty, then $\mathcal{M}_1^{\perp \perp}=\mathcal{D}^q$.\\

Suppose that  the set $S:=\mathcal{V}(\frak{p})\cap \mathbb{Q}^n$ is not empty and is not dense in $\mathcal{V}(\frak{p})$. The radical      
ideal $\frak{i}:=\cap _{b\in S}(D-b)$ defines  $\mathcal{V}(\frak{i})\subset \mathbb{C}^n$, which is
the closure of $S$.  Now $\cap (\ker (\mathcal{D}^q/\mathcal{M}_1\stackrel{\ell}{\rightarrow}\mathcal{D}/(D-b))$, where the intersection is taken over all $b\in \mathcal{V}(\frak{i})\cap \mathbb{Q}^n$ and all $\ell$, contains $\frak{i}\mathcal{D}^q$. 
Thus we may as well continue with the module $\mathcal{M}_2:=\mathcal{M}_1+\frak{i}\mathcal{D}^q$  since $\mathcal{M}_2^{\perp \perp}=\mathcal{M}_1^{\perp \perp}$.

In general, $\mathcal{M}_2$ is not primary and we have to replace $\mathcal{M}_2$ again by the elements of an irredundant primary
$\cap (\mathcal{M}_2)_i $ decomposition of $\mathcal{M}_2$.   The minimal prime ideals $\frak{q}$ containing $\frak{i}$ are associated primes of 
$\mathcal{D}^q/\mathcal{M}_2$. For the corresponding primary factor $(\mathcal{M}_2)_i$ one has $(\mathcal{M}_2)_i^{\perp \perp}=
(\mathcal{M}_2)_i$ because $\mathcal{V}(\frak{q})\cap \mathbb{Q}^n$ is dense in $\mathcal{V}(\frak{q})$. If there are no more primary factors
(or if the other primary factors belong to prime ideals $\frak{r}$ such that $\mathcal{V}(\frak{r})\cap \mathbb{Q}^n$ is dense in $\mathcal{V}(\frak{r})$), then $\mathcal{M}_2^{\perp \perp}=\mathcal{M}_2$, and we are finished. However, if $\mathcal{M}_2$ has a primary factor $\mathcal{M}_3$ corresponding
to a prime ideal $\frak{r}$ such that $\mathcal{V}(\frak{r})\cap \mathbb{Q}^n$ is not dense in $\mathcal{V}(\frak{r})$, then we 
have to repeat the above process. The Noether property guaranties that the process ends.  Except for the
problem of finding rational points on irreducible subspaces of $\mathbb{A}^n$, the above is really an algorithm.   \\

\begin{example} Behaviors related to rational points on algebraic  varieties.{\rm \\ 
 (1) $n=2$. $\frak{i}=(D_1^2+D_2^2-1)\subset \mathcal{D}\ $
yields $\frak{i}^\perp =\{\sum _{a\in \mathbb{Q}^2,\ a_1^2+a_2^2=1}
c_ae^{\imath<a,x>}\}$  and $\frak{i}^{\perp \perp }=\frak{i}$.\\
(2) $n=2$. $\frak{i}=(D_1^2-(D_1^3+aD_1^2+bD_1+c))\subset 
\mathcal{D}.\ $ We suppose that $a,b,c\in \mathbb{Q}$ and that the equation defines an affine elliptic curve. Now these are the following possibilities (see \cite{sil}):\\
(a) The elliptic curve has no rational point other than its infinite point. Then
$\frak{i}^{\perp\perp}=\mathcal{D}$.\\
(b) The elliptic curve has finitely many rational points. Then 
$\frak{i}^{\perp \perp }\subset \mathcal{D}$ is the intersection of the finitely many maximal ideals
$(D-a)$ with $a\in \mathbb{Q}^2$ lying on
the elliptic curve.\\ 
(c) The rank of the elliptic curve is positive and $\frak{i}^{\perp \perp }=\frak{i}$.\\
(3) $n=3$. Let the principal prime ideal $\frak{p}\subset \mathcal{D}$ define an irreducible affine surface $S\subset \mathbb{A}^3$ over $\mathbb{Q}$. The following possibilities
occur:\\    
(a) $S(\mathbb{Q})=\emptyset$ and $\frak{p}^{\perp \perp }=\mathcal{D}$.\\
(b) $S(\mathbb{Q})$ is finite (and non empty); then $\frak{p}^{\perp \perp}$ is the intersection
of the maximal ideals $(D-a) $ with $a\in
S(\mathbb{Q})$.\\
(c) $S(\mathbb{Q})$ is infinite and the Zariski closure of this set is a curve on
$S$. Then $\frak{p}^{\perp \perp}$ is the (radical) ideal of this curve.\\
(d) $S(\mathbb{Q})$ is Zariski dense in $S$; then
 $\frak{p}^{\perp \perp }=\frak{p}$.}\hfill $\square$\end{example}

\subsection{$\mathcal{C}^\infty (\t)$ and $\mathcal{C}^\infty(\t)_{\fin}$}
We consider the signal space $\mathcal{F}=\mathcal{C}^\infty(\t)_{\fin}$. As in \S 4.3, there is
a 1-1 relation between the behaviors $\mathcal{B}\subset \mathcal{F}$ and the Zariski
closed subsets $S$ of $\mathbb{C}^n$ such that $S\cap \mathbb{Z}^n$ is dense in
$S$. For an ideal $\frak{i}\subset \mathcal{D}$, the ideal $\frak{i}^{\perp \perp}$ is the intersection
of the maximal ideals $(D-a)\supset \frak{i}$ with
$a\in \mathbb{Z}^n$.  The descriptions of $\mathcal{M}^{\perp \perp}$ for a submodule $\mathcal{M}$ of $\mathcal{D}^q$ are the ones given in \S 4.3 with $\mathbb{Z}$ replacing 
$\mathbb{Q}$.\\

\begin{example} {\rm
(1) Let $\frak{i}\subset \mathcal{D}$ be an ideal. Let $\frak{j}\subset \mathcal{D}$ denote  the smallest ideal containing $\frak{i}$ which is generated by elements in $\mathbb{Z}[D_1,\dots ,D_n]$. Then $\frak{i}^\perp =\frak{j}^\perp$. Indeed, $\frak{i}^{\perp \perp}$ is generated by
elements in $\mathbb{Z}[D_1,\dots ,D_n]$.
Consider for example the ideal $\frak{i}\subset \mathbb{C}[D_1,D_2,D_3]$ generated by $(D_1^2-D_2^2)+\pi (D_1^2+D_3^3)+
\pi ^2(D_1D_2D_3-1)$. The ideal $\frak{j}$ is generated by
$(D_1^2-D_2^2), (D_1^2+D_3^3),
(D_1D_2D_3-1)$. Then $\frak{i}^\perp =\frak{j}^\perp$ and 
$S(\frak{i})=\{(1,-1,-1), (-1,1,-1)\}$. \\
(2) Let $\frak{i}\subset \mathbb{C}[D_1,D_2,D_3]$ be generated by
$D_1^2+D_2^2-D_3^2$. Then $\frak{i}^{\perp \perp}=\frak{i}$ because the
set $S(\frak{i})=\{(a_1,a_2,a_3)\in \mathbb{Z}^3|\ a_1^2+a_2^2-a_3^2=0 \}$ is Zariski dense in $\{(a_1,a_2,a_3)\in \mathbb{C}^3|\ a_1^2+a_2^2-a_3^2=0 \}$.
}
\hfill $\square$
\end{example}

\noindent {\bf Acknowledgement}: We are grateful to the referees for their careful reading of the manuscript.

\end{document}